\newcommand{\ie}{{\em i.e.,~}}
\newcommand{\eg}{{\em e.g.,~}}
\newcommand{\RR}{\mathbb{R}}
\newcommand{\NN}{\mathbb{N}}
\newcommand{\diag}{\mathrm{diag}}
\newcommand{\eqdef}{\ensuremath{\stackrel{\mbox{\upshape\tiny def.}}{=}}}
\newcommand{\ofrac}[2]{\frac{#1}{#2}}
\newcommand{\sinverse}{\sharp}
\newtheorem{assumption}[theorem]{Assumption}
\newcommand{\cent}{L_\mathrm{center}}
\newcommand{\var}{\mathrm{var}}
\newcommand{\dH}{d_\mathcal{H}}
\newcommand{\soll}{\hat P(\theta)}
\newcommand{\sol}{\hat P}
\newcommand{\cost}{\mathcal{L}}
\newcommand{\const}{\mathrm{const}}
\newcommand{\Rpos}{\mathbb{R}_{>0}}
\newcommand{\Rzpos}{\mathbb{R}_{\geq0}}
\DeclareMathOperator{\Ent}{Ent}
\definecolor{linkcolor}{RGB}{83,83,182}
\definecolor{citecolor}{RGB}{128,0,128}
\title{The derivatives of Sinkhorn--Knopp converge\thanks{Submitted \today.\funding{E. P. acknowledges the financial support of the AI Interdisciplinary Institute ANITI funding under the grant agreement ANR-19-PI3A-0004, Air Force Office of Scientific Research, Air Force Material Command, USAF, under grant numbers FA9550-19-1-7026, and ANR MaSDOL 19-CE23-0017-01. S. V. acknowledges the support ANR GraVa, grant ANR-18-CE40-0005.}}}
\author{Edouard Pauwels\thanks{IRIT, CNRS, Universit\'e de Toulouse, ANITI, Toulouse, France. Institut universitaire de France (IUF). (\email{edouard.pauwels@irit.fr}).} \and Samuel Vaiter\thanks{CNRS \& Université Côte d’Azur, CNRS, LJAD, France (\email{samuel.vaiter@cnrs.fr}).}}
\begin{document}

\maketitle

\begin{abstract}
    We show that the derivatives of the Sinkhorn--Knopp algorithm, or iterative proportional fitting procedure, converge towards the derivatives of the entropic regularization of the optimal transport problem with a locally uniform linear convergence rate.
\end{abstract}

\begin{keywords}
Optimal transport, Sinkhorn algorithm, Algorithmic differentiation
\end{keywords}

\begin{MSCcodes}
65K10, 90B06, 40A30
\end{MSCcodes}

\begin{wrapfigure}{r}{5.5cm}
    \includegraphics[width=0.9\linewidth]{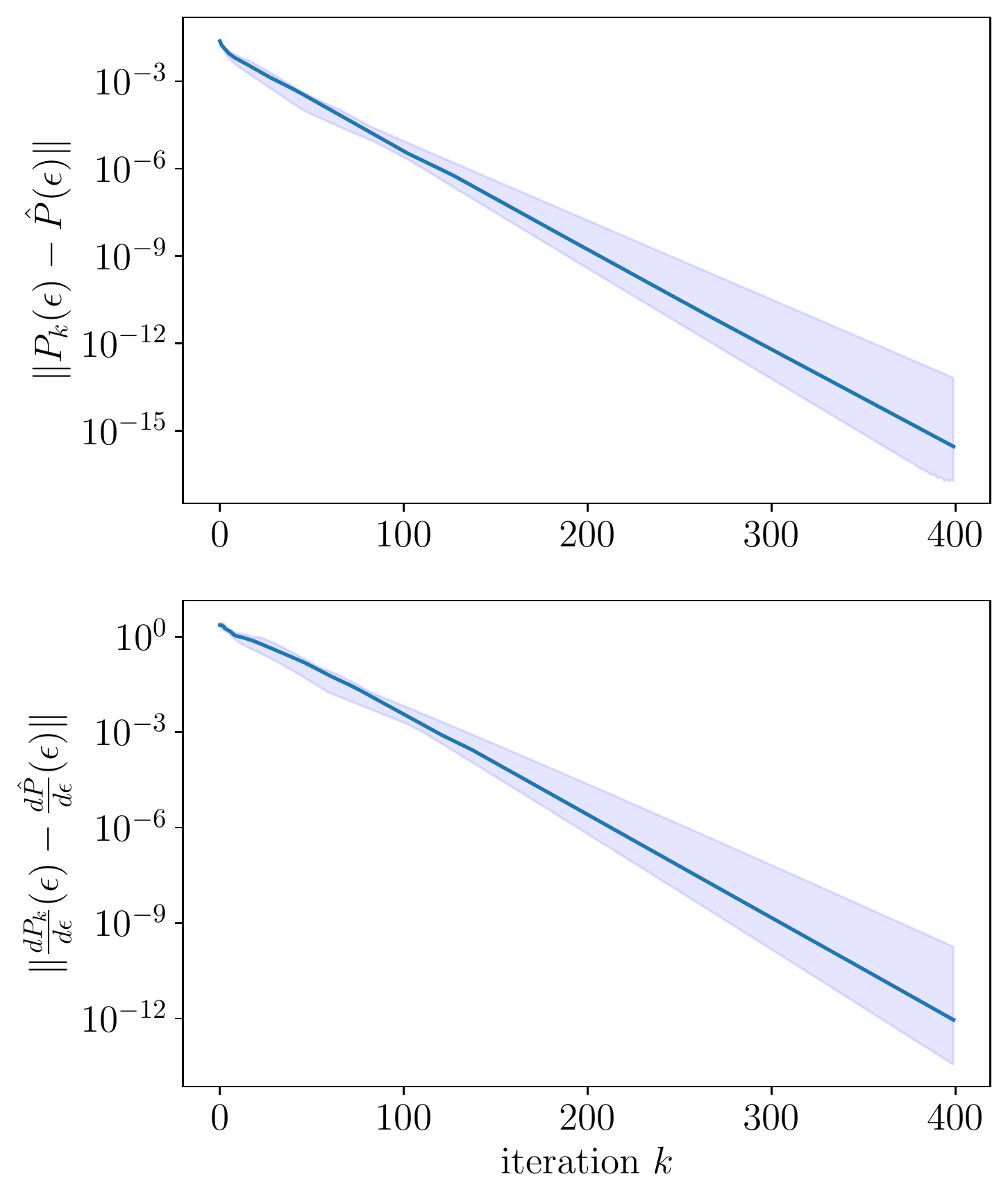}
    \captionsetup{
    format=plain,
    size=footnotesize,
    labelfont=sc,
    name=Fig.}
    \caption{Illustration of the linear convergence of the regularized transport plan $P_k(\theta)$~\eqref{eq:pk} of Sinkhorn-Knopp~\eqref{eq:defRecForward} and its derivatives $\frac{dP_k}{d\theta}(\theta)$ towards the derivative of the entropic optimal transport problem~\eqref{eq:reg-ot}.}
    \label{fig:linconv}
    \vspace{-15pt}
\end{wrapfigure}
\section{Introduction}
The optimal transport (OT) problem plays an increasing role in optimization and machine learning~\cite{peyre2019book}.
In particular, entropic regularization of OT has gained a lot of attraction by the existence of a simple and efficient algorithm introduced in~\cite{sinkhorn1967diagonal}, also known as matrix scaling or iterative proportional fitting procedure in the stochastic literature, see~\cite{ruschendorf1995convergence}.
It is known that Sinkhorn--Knopp iterates converge linearly, with an explicit rate computable from the cost matrix, to the solution of entropic OT since the work of~\cite{franklin1989scaling} introducing the use of the Hilbert metric.

\subsection{Differentiation of the Sinkhorn--Knopp algorithm}
Among the different properties of Sinkhorn--Knopp, a striking one is its differentiability with respect to the inputs.
Differentiating the iterates of the Sinkhorn--Knopp algorithm is a common routine in machine learning.
It was first used by \cite{adams2011ranking} for ranking with linear objective function.
They proposed to use backpropagation through Sinkhorn--Knopp iterates with respect to the cost matrix, without discussion of the convergence of the Jacobian.
It was later used for different applications, such as computation of Wasserstein barycenters casted as an optimization problem~\cite{bonneel2016wasserstein}, where backpropagation is performed with respect to the weight vector, for training generative models involving an OT loss as in~\cite{hashimoto2016learning,genevay2018learning}, definition of differentiable sorting procedures~\cite{cuturi2019differentiable} or solving cluster assignments problems~\cite{caron2020unsupervised}.
Popular libraries such as \texttt{POT}~\cite{flamary2021pot} or \texttt{OTT}~\cite{cuturi2022optimal} for computational optimal transport implement the backpropagation of Sinkhorn--Knopp.
To mitigate the memory footprint required by backprogation, an alternative is to use implicit differentiation as discussed first by~\cite{luise2019differential} for computing the derivatives of Sinkhorn divergences.
This approach was later used in~\cite{cuturi2020supervised,eisenberger2022unified}.
To the best of our knowledge, even though some these works justify the correctness of using automatic differentiation for a given iterate, \emph{they do not consider the issue of the convergence of the derivatives} computed by automatic differentiation. 

\subsection{Convergence of algorithmic differentiation}
The issue of the convergence of the derivatives of an algorithm was considered in the automatic differentiation community.
The linear convergence of derivatives was studied in \cite{gilbert1992automatic,griewank1993derivative} for piggyback recursion and in~\cite[Theorem 2.3]{christianson1994reverse} for backpropagation.
More recently, convergence of the derivatives of gradient descent~\cite{mehmood2020automatic,lorraine2020optimizing}, the Heavy-ball~\cite{mehmood2020automatic} method or nonsmooth fixed point methods~\cite{bolte2022automatic} were analyzed.
All these analysis \emph{require explicitly, or implicitly, that the (generalized) Jacobians are strict contractions}, \ie Lipschitz continuous with a constant strictly lesser than 1.
Unfortunately, the derivatives of Sinkhorn--Knopp do not enjoy this property.

\subsection{Contribution}
We prove (Theorem~\ref{thm:cvgt}) that the derivatives of the iterates of Sinkhorn--Knopp algorithm converge towards the derivative of the entropic regularization of optimal transport, with an explicit expression of the derivative and with a locally uniform linear convergence rate, provided that all functions entering problem definition are twice continuously differentiable.

\subsection{Organization}
Our paper is organized as follows.
Section~\ref{sec:sk} introduces the parameterized entropic regularized optimal transport problem with the Sinkhorn--Knopp algorithm and recalls linear convergence properties.
In Section~\ref{sec:dsk}, we state our main result stating the convergence of the derivatives of Sinkhorn-Knopp towards the derivatives of the regularized optimal transport with a locally uniform linear convergence rate.
Section~\ref{sec:proof} provides the proof of our result.
Section~\ref{sec:proofPerturb} contains important intermediate results toward a linear rate for the convergence.
Section~\ref{sec:lemmas} establishes miscellaneous lemmas that are used in the main proof.

\subsection{Notations}
The set of positive reals is denoted $\Rpos$, of nonnegative reals $\Rzpos$ and of nonzero reals $\RR_{\neq 0}$.
The simplex $\Delta^{n-1}$ is the set of vectors of $\Rzpos^n$ summing to 1
\[
    \Delta^{n-1} = \left\{ x \in \RR^n \, : \, \sum_{i=1}^n x_i = 1 \text{ and } x_i \geq 0, \forall i \in \{1,\dots,n\} \right\} .
\]
The identity matrix (of arbitrary size) is denoted by $I$.
For two vectors $x \in \RR^n, y \in \RR_{\neq 0}^n$, the \emph{entry-wise} (Hadamard) division $\ofrac{x}{y}$ is defined as $\left(\ofrac{x}{y}\right)_i = x_i / y_i$, and the product $x \odot y$ is defined as $(x \odot y)_i = x_i  y_i$, for all $i \in \{1, \dots, n\}$.
The 1-vector $1_n \in \RR^n$ is the vector only composed of 1's.
When the context is clear, and to lighten the notations, $\ofrac{1}{x}$ for $x \in \RR_{\neq 0}$ should be understood as $\ofrac{1_n}{x}$.
Given a function $f: \RR \to \RR$, we extend its domain as $f: \RR^p \to \RR^p$ by applying it entrywise, that is for $x \in \RR^n$, $f(x)_i = f(x_i)$, for all $i \in \{1, \dots, n\}$.
Given $l \in \NN_{>0}$ and a continuously differentiable function $F: \RR^p \to \RR^{n_1 \times \cdots \times n_l}$, we denote by $\frac{dF}{d\theta}(\theta) \in \RR^{n_1 \times \cdots \times n_l \times p}$ its Jacobian matrix (or tensor) at $\theta \in \RR^p$, \ie
\[
    \left( \frac{dF}{d\theta}(\theta) \right)_{i_1,\cdots,i_l,j}
    =
    \lim_{h \to 0} \frac{F_{i_1,\cdots,i_l}(\theta + h e_j) - F_{i_1,\cdots,i_l}(\theta)}{h} ,
\]
where $(e_j)_{j=1,\dots,n}$ is the canonical basis of $\RR^n$.
Given a differentiable function $F: \RR^n \times \RR^p \to \RR^m$, we denote by $J_F(x, \theta)$ the total derivative at $(x, \theta) \in \RR^n \times \RR^p$, that is
\[ J_F(x, \theta) =
\begin{pmatrix}
    \frac{\partial F(\cdot, \theta)}{\partial x}(x) & \frac{\partial F(x, \cdot)}{\partial \theta}(\theta)
\end{pmatrix} ,
\]
where $\frac{\partial F(\cdot, \theta)}{\partial x}(x)$ and $\frac{\partial F(x, \cdot)}{\partial \theta}(\theta)$ are the partial derivatives of $F$.

\section{Entropic optimal transport and Sinkhorn--Knopp algorithm}\label{sec:sk}

\subsection{Entropic regularization}
We consider a parametric formulation of the entropic OT\footnote{We recover the standard formulation letting $a,b,C,\epsilon$ be constant functions.}.
The entropic regularization of optimal transport associated to the parameterized marginals $a: \RR^p \to \Delta^{n-1} \cap \Rpos^n$ and $b: \RR^p \to \Delta^{n-1} \cap \Rpos^m$ of level $\epsilon: \RR^p \to \Rpos$ for the parameterized cost matrix $C: \RR^p \to \RR^{n \times m}$ reads for $\theta \in \RR^p$
\begin{equation}\label{eq:reg-ot}\tag{\ensuremath{\text{OT}_\theta}}
    \soll = \arg\min_{P \in U(\theta)} \cost(P, \theta) \eqdef \langle P, C(\theta) \rangle - \epsilon(\theta) \Ent(P) ,
\end{equation}
where $\langle P, P' \rangle = \sum_{i,j} P^{}_{i,j} P'_{i,j}$, $U(\theta)$ is the set of admissible couplings (also called transportation polytope)
\begin{equation*}
    U(\theta) \eqdef \{ P \in \Rzpos^{n \times m} \, : \, P 1_m = a(\theta) \quad \text{and} \quad P^\top 1_n = b(\theta) \} ,
\end{equation*}
and $\Ent$ is the entropic regularization of the coupling matrix $P$ defined as
\begin{equation*}
    \Ent(P) \eqdef -\sum_{i=1}^{n} \sum_{j=1}^{m} P_{i,j} \left( \log (P_{i,j}) - 1 \right) ,
\end{equation*}
where $P_{i,j} \log (P_{i,j}) = 0$ if $P_{i,j} = 0$, by continuous extension.
Note that $\cost_\theta = \cost(\cdot, \theta)$ defined in \eqref{eq:reg-ot} is $\epsilon(\theta)$-strongly convex, hence \eqref{eq:reg-ot} has a unique minimizer\footnote{The (strict) positivity follows from assumptions $a(\theta) > 0$ and $b(\theta) > 0$. Indeed, $P = a(\theta) b(\theta)^T$ is feasible for \eqref{eq:reg-ot}, with strictly positive entries, therefore Slater's qualification condition holds for \eqref{eq:reg-ot} and the required form follows from necessary and sufficient KKT conditions for the (attained) optimum, see for example \cite[Lemma 2]{cuturi2013sinkhorn}} $\soll \in \Rpos^{n\times m}$.
We will assume that all functions entering problem definition are twice continuously differentiable.

\subsection{Sinkhorn--Knopp algorithm}
The Sinkhorn--Knopp algorithm is built upon the fact~\cite[Theorem 1]{sinkhorn1964relationship} that the unique solution $\soll$ of~\eqref{eq:reg-ot} has the form for all $i \in \{1, \dots, n\}$ and $j \in  \{1, \dots, m \}$
\begin{equation}\label{eq:sol-form}
    \soll_{i,j} = u_i(\theta) K_{i,j}(\theta) v_j(\theta)
    \quad \text{where} \quad
    K_{i,j}(\theta) = \exp\left({-\frac{C_{i,j}(\theta)}{\epsilon(\theta)}}\right) > 0,
\end{equation}
for positive numbers $u_i(\theta),v_j(\theta)$, $i = 1,\ldots, n$, and $j = 1,\ldots,m$.
The goal is thus to find positive vectors $u(\theta) \in \Rpos^n$ and $v(\theta) \in \Rpos^m$, such that 
\begin{align}\label{eq:optiCondition}
    \diag(u(\theta)) K(\theta) \diag(v(\theta)) 1_m = a(\theta)
    \quad \text{and} \quad 
    \diag(v(\theta)) K(\theta)^T \diag(u(\theta)) 1_n = b(\theta).
\end{align}
In its most elementary formulation, the Sinkhorn--Knopp algorithm, also called matrix scaling problem algorithm, has the following alternating updates,
\begin{equation}\label{eq:sinkhorn}
    u_{k+1}(\theta) =  \frac{a(\theta)}{K(\theta) v_k(\theta)}
    \qquad \text{and} \qquad
    v_{k+1}(\theta) = \frac{b(\theta)}{K(\theta)^T u_{k+1}(\theta)} ,
\end{equation}
starting from a couple $(u_0(\theta), v_0(\theta)) \in \Rpos^n \times \Rpos^m$, see~\cite{thornton2022rethinking} for a discussion on initializations strategies.
Even though in practice it is not necessary to evaluate it at each iteration, one can use~\eqref{eq:sol-form} to form a current guess at iteration $k$ as $\diag (u_{k}(\theta))  K(\theta) \diag(v_{k}(\theta))$.

\subsection{Reduced formulation of Sinkhorn--Knopp}\label{sec:optimalPlan}
We will analyse an equivalent version of \eqref{eq:sinkhorn} by considering a single iterate $u$ and performing the change of variable $x = \log(u)$. Given an initilization $x_0(\theta) \in \RR^n$, this results in rewriting~\eqref{eq:sinkhorn} as the recursion in the ``log-domain''
\begin{align}\tag{\ensuremath{\text{SK}_\theta}}
    x_{k+1}(\theta) = F(x_k(\theta), \theta)
    \label{eq:defRecForward}
\end{align}
where
\begin{equation*}
     F(x, \theta) \eqdef \log(a(\theta)) - \log\left( K(\theta)\left( \frac{b(\theta)}{K(\theta)^Te^{x}} \right) \right).
\end{equation*}
Note that this formulation is close to the dual formulation of~\eqref{eq:reg-ot} as explained in~\cite[Remark 4.22]{peyre2019book}, but we will not need duality results along this paper.

We will work under the following standing assumption
\begin{assumption}[Data are continuously differentiable]
    Let $\Omega \subseteq \RR^p$ be a connected open set. The data in problem \eqref{eq:reg-ot}, \ie $C \colon \Omega \to \RR^{n \times m}$, $a \colon \Omega \to \Delta^{n-1} \cap \Rpos^n$, $b \colon \Omega \to \Delta^{m-1} \cap \Rpos^n$, $\epsilon \colon \Omega \to \Rpos$, and initialization $x_0 \colon \Omega \to \RR^n$, are all twice continuously differentiable functions on $\Omega$.
    \label{ass:mainAssumption}
\end{assumption}

It is possible to get back to the scaling factors $u_k(\theta)$ and $v_k(\theta)$ from the reduced variable $x_{k}(\theta)$ as
\[
    u_k(\theta) = e^{x_{k}(\theta)}
    \quad \text{and} \quad
    v_k(\theta) = \ofrac{b(\theta)}{K(\theta)^T e^{x_{k}(\theta)}} .
\]

Using the relationship~\eqref{eq:sol-form}, the optimal coupling matrix can be approximated as
\begin{align}
				P(x,\theta) = \diag(e^x) K(\theta) \diag\left( \ofrac{b(\theta)}{K(\theta)^Te^x} \right),
				\label{eq:optimalSolution}
\end{align}
and we construct transport plan estimates associated to each iterate, for all $k \in \NN$,
\begin{equation}\label{eq:pk}
     P_k(\theta) = P(x_k(\theta),\theta) .
\end{equation}
It is known that $P_k(\theta)$ converges linearly~\cite{franklin1989scaling} to the optimal transport plan $\hat P(\theta)$ for \eqref{eq:reg-ot}.
The next paragraph is dedicated to study the linear convergence of the reduced variable $x_k(\theta)$.

\subsection{Linear convergence of the centered reduced iterates}
It is known that $u_k(\theta)$ converges to a limit $\bar{u}(\theta)$, with a linear rate in the Hilbert metric~\cite{franklin1989scaling}, see also \cite[Theorem 4.2]{peyre2019book}, whereas we are concerned with the convergence of the reduced iterates in the ``log-domain''.
In order to study the convergence of $(x_k)_{k \in \NN}$, let us introduce the linear map $\cent$ which associates to $x$ its centered version:
\begin{equation}\label{eq:lcenter}
    \cent \colon
    \begin{cases}
        \RR^n &\to \RR^n \\
        x &\displaystyle\mapsto x - \left(\frac{1}{n} \sum_{i=1}^n x_i \right) 1_n .
    \end{cases}
\end{equation}

To analyze the convergence rate of Sinkhorn-Knopp algorithm, it is standard to use the Hilbert projective metric~\cite{birkhoff1957extensions} defined on $\Rpos^n$ as
\[
    \dH(u,u') = \|\log(u) - \log(u')\|_{\var} ,
\]
where $\|x\|_\var$ is the variation seminorm of $x \in \RR^n$ defined as
\begin{equation}\label{eq:fvar}
    \|x\|_\var = \max_{i=1,\dots,n} x_i - \min_{i=1,\dots,n} x_i .
\end{equation}

The next lemma shows the (local) linear convergence in $\ell^2$ norm of the centered reduced variable $\cent(x_k(\theta))$.
\begin{lemma}[Local linear convergence of $\cent(x_k(\theta))$]\label{lem:linearConvergenceX}
The centered reduced variable $\cent(x_k(\theta))$ converges linearly, locally uniformly, to $\cent(\bar{x}(\theta))$, \ie there exists $c: \Omega \to \Rpos$ and $\rho: \Omega \to (0,1)$ continuous such that  all $k \in \NN$ and $\theta \in \Omega$, 
\[ 
    \| \cent(x_k(\theta)) - \cent(\bar{x}(\theta)) \|
    \leq
    c(\theta) \rho(\theta)^k.
\]
Furthermore, $\theta \to \cent(\bar{x}(\theta))$ is continuous on $\Omega$.
\end{lemma}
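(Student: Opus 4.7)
The plan is to rely on the classical Birkhoff--Franklin contraction theorem for the Sinkhorn iteration in the Hilbert projective metric, and then translate convergence in the variation seminorm into convergence of the centered iterates in the Euclidean norm.

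Fix $\theta \in \Omega$. The map underlying \eqref{eq:defRecForward} corresponds, in the $u$-variable, to the composition of two projective maps of the form $w\mapsto \bullet/(K(\theta) w)$ and $w \mapsto \bullet/(K(\theta)^\top w)$. By the Birkhoff--Hopf theorem (see \cite{franklin1989scaling}), each of these contracts $\dH$ with Birkhoff coefficient $\lambda(K(\theta)) = (\sqrt{\eta(\theta)}-1)/(\sqrt{\eta(\theta)}+1) < 1$, where $\eta(\theta) = \max_{i,j,k,l} K_{ik}(\theta)K_{jl}(\theta)/(K_{jk}(\theta)K_{il}(\theta))$. Set $\rho(\theta) = \lambda(K(\theta))^2$. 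Since $K(\theta) = \exp(-C(\theta)/\epsilon(\theta))$ has strictly positive entries depending continuously on $\theta$ (Assumption~\ref{ass:mainAssumption}), $\rho$ is continuous on $\Omega$ with values in $(0,1)$. The iterates $(e^{x_k(\theta)})_{k\in\NN}$ are Cauchy for $\dH$ and converge projectively; picking any representative $e^{\bar{x}(\theta)}$ of the limit ray and using the standard a posteriori estimate for a $\rho(\theta)$-contraction we obtain
\[
    \|x_k(\theta) - \bar{x}(\theta)\|_\var
    = \dH(e^{x_k(\theta)}, e^{\bar{x}(\theta)})
    \leq \frac{\|x_1(\theta) - x_0(\theta)\|_\var}{1 - \rho(\theta)}\,\rho(\theta)^k.
\]
The prefactor is continuous in $\theta$ by continuity of $F$ and $x_0$, and we set $c(\theta)$ equal to it (up to the constant in the next paragraph).

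Next, I would use the elementary norm equivalence: for any $y \in \RR^n$, writing $M = \max_i y_i$, $m = \min_i y_i$, and $\bar y = \frac1n\sum_i y_i \in [m,M]$, we have $\|\cent(y)\|_\infty = \max(M-\bar y, \bar y - m) \leq M - m = \|y\|_\var$, and hence $\|\cent(y)\|_2 \leq \sqrt{n}\,\|y\|_\var$. Applied to $y = x_k(\theta) - \bar{x}(\theta)$, linearity of $\cent$ yields $\cent(x_k(\theta)) - \cent(\bar{x}(\theta)) = \cent(x_k(\theta) - \bar{x}(\theta))$ and therefore
\[
    \|\cent(x_k(\theta)) - \cent(\bar{x}(\theta))\|
    \leq \sqrt{n}\,c(\theta)\,\rho(\theta)^k,
\]
which is the announced bound (after absorbing $\sqrt n$ into $c$). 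Local uniformity on compact $K\Subset\Omega$ follows by replacing $c(\theta)$ and $\rho(\theta)$ with $\sup_{\theta\in K}c(\theta)$ and $\sup_{\theta\in K}\rho(\theta)<1$, both finite by continuity.

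For continuity of $\theta \mapsto \cent(\bar{x}(\theta))$, each $\theta \mapsto \cent(x_k(\theta))$ is continuous (indeed $C^2$) as a composition of continuous maps, and the previous inequality shows that $\cent(x_k)$ converges to $\cent(\bar{x})$ uniformly on every compact subset of $\Omega$; a uniform limit of continuous functions is continuous, which concludes the proof.

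The main obstacle is the bookkeeping needed to extract continuous $c$ and $\rho$: the Birkhoff--Franklin argument produces only pointwise rates, and some care is required to write the constant $c(\theta)$ as a continuous function of $\theta$. The a posteriori Banach-style estimate above, based solely on $\|x_1(\theta)-x_0(\theta)\|_\var$ and $\rho(\theta)$, sidesteps this difficulty since both quantities inherit continuity directly from Assumption~\ref{ass:mainAssumption}.
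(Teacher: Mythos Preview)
Your argument is correct and follows the same overall route as the paper: Birkhoff contraction of the Sinkhorn map in the Hilbert metric, followed by the comparison $\|\cent(y)\|\leq\sqrt{n}\,\|y\|_\var$ (this is exactly the paper's Lemma~\ref{lem:variationPseudonorm}), and finally continuity of the limit via local uniform convergence. The one noteworthy difference is how you obtain a \emph{continuous} prefactor $c(\theta)$: the paper quotes the explicit Franklin--Lorenz bound involving $\dH(P(x_0(\theta),\theta)1_m,a(\theta))$ and $\dH(P(x_0(\theta),\theta)^\top 1_n,b(\theta))$, whereas you use the standard Banach a~posteriori estimate $d(x_k,\bar x)\leq \rho^k d(x_0,x_1)/(1-\rho)$. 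Your choice is slightly more economical, since continuity of $\theta\mapsto\|x_1(\theta)-x_0(\theta)\|_\var$ is immediate from the continuity of $F$ and $x_0$, and it avoids invoking the additional reference. Both approaches yield the same rate $\rho(\theta)=\lambda(K(\theta))^2$.
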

\begin{proof}
    We combine the linear convergence result on $u_k(\theta)$ of~\cite{franklin1989scaling} with Lemma \ref{lem:variationPseudonorm}, following the suggestion of \cite[Remark 4.12]{peyre2019book}.

    We clarify below how to combine these arguments.
    We first show that the linear convergence of $u_k(\theta)$ is such that for all $\theta \in \Omega$ there exists $c(\theta)>0$ and $\rho(\theta) \in (0,1)$ such that for all $k \in \NN$
    \begin{align*}
        \dH(u_k(\theta),\bar{u}(\theta)) \leq c(\theta) \rho(\theta)^k ,
    \end{align*}
    and the mapping $c$ and $\rho$ are continuous.
    Indeed, \cite[Theorem 4]{franklin1989scaling} ensures that for all $k \in \NN$,
    \begin{align*}
        \dH(u_k(\theta),\bar{u}(\theta)) + \dH(v_k(\theta),\bar{v}(\theta)) \!\leq\! \frac{\kappa^2(K(\theta))^k}{1- \kappa^2(K(\theta))} (\dH(u_0(\theta),\bar{u}(\theta)) + \dH(v_0(\theta),\bar{v}(\theta))) ,
    \end{align*}
    where $\kappa(K)$ is the contraction ratio defined for $K \in \Rpos^{n\times m}$ as 
    \begin{align*}
        \kappa(K) = \frac{\vartheta(K)^{1/2} - 1}{\vartheta(K)^{1/2} + 1} < 1
        \quad \text{and} \quad
        \vartheta(K) = \max_{i,j,k,l} \frac{K_{i,k}K_{j,l}}{K_{j,k}K_{i,l}} .
    \end{align*}
    Remark that $P_k$ and $\soll$ enjoy the relation
    \[
        P_k = \diag\left(\ofrac{u_k(\theta)}{\bar{u}(\theta)}\right) \soll \diag\left(\ofrac{v_k(\theta)}{\bar{v}(\theta)}\right)
    \]
    and $\dH(\ofrac{u_k(\theta)}{\bar{u}(\theta)}, 1_n) = \dH(u_k(\theta),\bar{u}(\theta))$.
    Using~\cite[Theorem 4.2]{peyre2019book}, we deduce that 
    \begin{align*}
        \dH(u_k(\theta),\bar{u}(\theta))
        &\leq \frac{\kappa^2(K(\theta))^k}{(1- \kappa^2(K(\theta)))^2} \left(\dH(P(x_0(\theta),\theta) 1_m,a) + \dH(P(x_0(\theta),\theta)^T 1_n,b)\right) , \\
        &= c(\theta) \rho(\theta)^k ,
    \end{align*}
    where
    \begin{align*}
        c(\theta) &= \kappa^2(\theta) \frac{\dH(P(x_0(\theta),\theta) 1_m,a(\theta)) + \dH(P(x_0(\theta),\theta)^T 1_n,b(\theta))}{(1- \kappa^2(K(\theta)))^2} , \\
        \rho(\theta) &= \kappa^2(\theta) .
    \end{align*}
    Since for all $\theta$, $K(\theta) > 0$ and $K$ is continuous, we have that $\theta \mapsto \kappa^2(\theta)$ is continuous, and since $\theta \mapsto x_0(\theta)$ is assumed to be continuous on $\Omega$, $\theta \mapsto \dH(P(x_0(\theta), \theta)$ is also continuous.
    Thus, $c(\theta)$ and $\rho(\theta)$ depend continuously on the initial condition $x_0$ and problem data ($a,b,K,\epsilon$) which are all continuous functions of $\theta$. Therefore the linear convergence is actually locally uniform in $\theta$. 
    
    To conclude the proof, we need to remark that the Hilbert projective metric on $u$ corresponds to the variation seminorm after the change of variable $x = \log(u)$ so that for all $k \in \NN$ and all $\theta \in \Omega$,
    \begin{align*}
        \|x_k(\theta) -\bar{x}(\theta)\|_\var = \dH(u_k(\theta),\bar{u}(\theta)),
    \end{align*}
    and Lemma~\ref{lem:variationPseudonorm} provides
    \[
        \left\|\cent(x_k(\theta)) - \cent(\bar x(\theta))\right\|_\infty \leq \|x_k(\theta) - \bar x(\theta)\|_\var,
    \]
    which is the claimed result.
    
    Regarding the continuity, let $\theta_0 \in \Omega$, for all $\theta \in \Omega$ and all $k \in \NN$, we have
    \begin{align*}
        \dH(\bar{u}(\theta),\bar{u}(\theta_0)) &\leq \dH(\bar{u}(\theta),u_k(\theta)) + \dH(u_k(\theta),u_k(\theta_0)) + \dH(u_k(\theta_0),\bar{u}(\theta_0)) \\
        &\leq c(\theta) \rho(\theta)^k + c(\theta_0) \rho(\theta_0)^k + \dH(u_k(\theta),u_k(\theta_0)) .
    \end{align*}
    We may choose $k$ such that the first two terms are as small as desired uniformly for $\theta$ in a neighborhood of $\theta_0$. The last term is continuous in $\theta$ and evaluates to $0$ for $\theta = \theta_0$ so that reducing the neighborhood if necessary allows to choose it as small as desired, which proves continuity.
\end{proof}
Note that Lemma~\ref{lem:linearConvergenceX} does not imply the linear convergence of $(x_k(\theta))_{k \in \NN}$.
As we will see later in Lemma~\ref{lem:jacobianInvariance}, this is not an issue to our objective -- proving the convergence of the derivatives of~\eqref{eq:defRecForward} -- because derivatives of the algorithm enjoy a directional invariance which makes them equal when evaluated at $x_k(\theta)$ or $\cent(x_k(\theta))$ .

\section{Derivatives of Sinkhorn--Knopp algorithm and their convergence}\label{sec:dsk}

\subsection{Derivatives of the transport plan}
\label{sec:derivativeOptimalPlan}
Remark that for all $(x, \theta) \in \RR^n \times \Omega$, $P(x,\theta)$ is an $n \times m$ matrix.
Hence, $P(x, \cdot)$ is a map from $\RR^p$ to $\RR^{n \times m}$ and $P(\cdot, \theta)$ is a map from $\RR^n$ to $\RR^{n \times m}$.
Thus, we identify its partial derivatives with third-order tensors:
\begin{align}
    \frac{\partial P(\bar{x}(\theta),\theta)}{\partial x} &\in \RR^{n \times m \times n},\nonumber\\
    \frac{\partial P(\bar{x}(\theta),\theta)}{\partial \theta} &\in \RR^{n \times m \times p}.
    \label{eq:partialDerivativeP}
\end{align}
Left multiplication by these derivatives is considered as follows, for arguments of compatible size: for any $c \in \RR^n$, $\frac{\partial P(\bar{x}(\theta),\theta)}{\partial x} c \in \RR^{n \times m}$ and for any $M \in \RR^{n \times q}$, for some $q \in \NN$, $\frac{\partial P(\bar{x}(\theta),\theta)}{\partial x} M \in \RR^{n \times m \times q}$,  both operations being compatible with the usual identification of vectors as single rows in $\RR^{n \times 1}$. This multiplication is assumed to be compatible with the rules of differential calculus, for example, if $v \colon \RR^p \to \Rpos^n$ is $C^1$, then we have the identity, for any $\theta \in \RR^p$,
\begin{align}
    \frac{\partial }{\partial \theta} P(v(\theta),\theta) = \frac{\partial P(v(\theta),\theta)}{\partial x} \frac{dv(\theta)}{d \theta} + \frac{\partial P(\bar{x}(\theta),\theta)}{\partial \theta} \in  \RR^{n \times m \times p}.
    \label{eq:totalDerivativeP}
\end{align}
The operation is also invariant with order of products, if $M = u v^T$, then 
\[
    \frac{\partial P(\bar{x}(\theta),\theta)}{\partial x} M =
    \frac{\partial P(\bar{x}(\theta),\theta)}{\partial x} \left(u v^T\right) =
    \left(\frac{\partial P(\bar{x}(\theta),\theta)}{\partial x} u\right)v^T .
\]

\subsection{Spectral pseudo-inverse}
In order to explicitly describe the derivative of $\soll$, we will use the following notion of pseudo-inverse of a diagonalizable matrix.
\begin{definition}[Spectral pseudo-inverse~\cite{scroggs1966alternate,ben2003generalized}]
    Given a diagonalizable matrix $M \in \RR^{n \times n}$, let $M = Q D Q^{-1}$ be a diagonalization, where $Q \in \RR^{n \times n}$ is invertible and $D \in \RR^{n \times n}$ is diagonal.
    The \emph{spectral pseudo-inverse} of $M$ is given by $M^\sinverse = Q D^{\dagger} Q^{-1}$ where $\dagger$ denotes Moore-Penrose pseudo-inverse.
    \label{def:spectralInverse}
\end{definition}
The Moore-Penrose $D^\dagger$ pseudo-inverse of a diagonal matrix $D \in \RR^{n \times n}$ is given by $(D^\dagger)_{ii} = (D_{ii})^{-1}$ if $(D_{ii}) \neq 0$ and 0 otherwise. 
The key property of the spectral pseudo-inverse is that it preserves the eigenspaces of $M$, contrary to the more standard Moore--Penrose pseudo-inverse which preserve eigenspaces only in special cases such as symmetric matrices.
\begin{lemma}[Eigenspaces presevation of spectral pseudo-inverse~\cite{scroggs1966alternate}]
    Let $M \in \RR^{n \times n}$ a diagonalizable matrix.
    Then, $M$ and $M^\sinverse$ have the same kernel and the remaining eigenspaces are the same with inverse eigenvalues.
\end{lemma}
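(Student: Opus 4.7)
The plan is to read off the action of both $M$ and $M^\sinverse$ on a common eigenbasis of $M$ and compare. Starting from any diagonalization $M = Q D Q^{-1}$ with $D = \diag(\lambda_1,\dots,\lambda_n)$, I would denote by $q_1,\dots,q_n$ the columns of $Q$ (so $Q^{-1} q_i = e_i$), which form an eigenbasis of $M$ with $M q_i = \lambda_i q_i$. Then by Definition~\ref{def:spectralInverse}, $M^\sinverse q_i = Q D^\dagger e_i$, and the description of $D^\dagger$ for a diagonal matrix reduces this to $M^\sinverse q_i = \lambda_i^{-1} q_i$ when $\lambda_i \neq 0$ and $M^\sinverse q_i = 0$ when $\lambda_i = 0$.

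From this one-line computation, the claim follows by grouping the basis vectors by eigenvalue. Writing $\RR^n = \ker M \oplus \bigoplus_{\mu \neq 0} E_\mu$, where $E_\mu$ is the $\mu$-eigenspace of $M$, the previous step shows that $M^\sinverse$ vanishes on $\ker M$ and acts as multiplication by $\mu^{-1}$ on each $E_\mu$. Since $M^\sinverse$ is invertible on the complement $\bigoplus_{\mu \neq 0} E_\mu$ (it is represented there by the diagonal matrix with entries $\mu^{-1}$), the inclusions $\ker M \subseteq \ker M^\sinverse$ and $E_\mu \subseteq \ker(M^\sinverse - \mu^{-1} I)$ become equalities for dimension reasons, yielding both assertions of the lemma.

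The only genuinely delicate point, which I would flag explicitly, is that Definition~\ref{def:spectralInverse} is phrased in terms of a particular choice of diagonalization $Q$, so one owes the reader a verification that $M^\sinverse$ does not depend on this choice. This follows from the same computation: two eigenbases differ by a block change of basis preserving each $E_\mu$ and the kernel, and on each such subspace the prescribed action of $M^\sinverse$ (scaling by $\mu^{-1}$, respectively zero) is intrinsic. Thus the matrix $M^\sinverse$ is well defined, and the eigenspace statement holds for every diagonalization. I expect the main obstacle to be only this bookkeeping; the rest is a direct unpacking of definitions.
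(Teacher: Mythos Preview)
Your argument is correct: once you fix any diagonalization $M = QDQ^{-1}$, the computation $M^\sinverse q_i = Q D^\dagger e_i$ immediately gives the claimed action on each eigenvector, and the dimension count makes the eigenspace inclusions into equalities. Your remark on well-definedness is also apt and the justification you give (any two eigenbases differ by a block change of basis on each eigenspace, and the prescribed scalar action is basis-free) is the right one.

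As for comparison: the paper does not actually supply its own proof of this lemma. It is stated with a citation to \cite{scroggs1966alternate} and used as a black box; the surrounding remark only notes that the result extends to non-diagonalizable matrices via the Jordan form. Your direct verification from Definition~\ref{def:spectralInverse} is exactly the kind of elementary argument one would expect for the diagonalizable case treated here, and it is entirely self-contained, whereas the paper defers to the literature.
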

Note that this definition and result are defined even for non-diagonalizable matrices in~\cite{scroggs1966alternate} using its Jordan reduced form, but for the sake of our results, we only need this property for diagonalizable matrices.

\subsection{Main result}
Our contribution is the following.
\begin{theorem}[The derivatives of Sinkhorn--Knopp converge]\label{thm:cvgt}
    Under Assumption \ref{ass:mainAssumption}, let $\bar x(\theta)$ the limit of Sinkhorn--Knopp iterations~\eqref{eq:defRecForward} initialized by $x_0(\theta)$ for all $\theta \in \Omega$.
    
    Then, the optimal coupling matrix $\sol$ is continuously differentiable and its derivative $\frac{d \soll}{d\theta} \in \RR^{n \times m \times p}$ is given by 
    \begin{align*}
        \frac{d \soll}{d\theta} &=   \frac{\partial P(\bar{x}(\theta),\theta)}{\partial x} (I - A(\theta))^{\sinverse} B(\theta)  + \frac{\partial P(\bar{x}(\theta),\theta)}{\partial \theta}
    \end{align*}
    where $A(\theta)$, $B(\theta)$ are the components of the total derivative of $F$ at $(\bar{x}(\theta), \theta)$, \ie
    \begin{align*}
        [A(\theta)\; B(\theta)] &= J_F(\bar{x}(\theta), \theta),
    \end{align*}
    $F$ (resp. $P$) is defined in \eqref{eq:defRecForward} (resp. \eqref{eq:optimalSolution}), and partial derivatives of $P$ are described in Section \ref{sec:derivativeOptimalPlan}. Here $\sinverse$ denotes the spectral pseudo-inverse of a diagonalizable matrix (Definition \ref{def:spectralInverse}). 
    
    Furthermore, $P_k$ is continuously differentiable for all $k$ and the sequence of derivatives $\frac{d P_k}{d \theta}$ converges at a linear rate, locally uniformly in $\theta$. 
    In particular, for all $\theta \in \Omega$,
    \[\boxed{
    \lim_{k \to +\infty} \frac{d P_k}{d \theta}(\theta)
    =
    \frac{d \sol}{d \theta}(\theta) .}
    \]
\end{theorem}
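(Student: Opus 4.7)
I would differentiate the Sinkhorn recursion $x_{k+1}(\theta) = F(x_k(\theta), \theta)$ with respect to $\theta$. The chain rule gives the inhomogeneous linear recursion
\[
    Y_{k+1}(\theta) = A_k(\theta)\, Y_k(\theta) + B_k(\theta), \qquad [A_k(\theta)\; B_k(\theta)] = J_F(x_k(\theta), \theta),
\]
for the Jacobians $Y_k(\theta) = \frac{dx_k}{d\theta}(\theta) \in \RR^{n \times p}$. The central obstacle is that $F$ is shift-equivariant: inspection of \eqref{eq:defRecForward} yields $F(x+c\,1_n, \theta) = F(x,\theta)+c\,1_n$, so $A_k(\theta)\, 1_n = 1_n$ and $I - A(\theta)$ is singular. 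This is exactly what forces the spectral pseudo-inverse $\sinverse$ into the statement. The dual useful fact, by inspection of \eqref{eq:optimalSolution}, is that $P$ is shift-invariant, $P(x+c\, 1_n,\theta) = P(x,\theta)$, so $\frac{\partial P}{\partial x}(x,\theta)\, 1_n = 0$. An immediate consequence is that $A$ and $B$ depend on $x$ only through its centered component, so by Lemma~\ref{lem:linearConvergenceX} we obtain linear and locally uniform convergence $A_k(\theta) \to A(\theta)$ and $B_k(\theta) \to B(\theta)$.

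Next I would carry out a spectral analysis of $A(\theta)$. A direct computation using the fixed-point identities $\hat P(\theta) 1_m = a(\theta)$ and $\hat P(\theta)^\top 1_n = b(\theta)$ yields
\[
    A(\theta)_{ij} = \frac{1}{a_i(\theta)} \sum_{k=1}^{m} \frac{\hat P_{ik}(\theta)\, \hat P_{jk}(\theta)}{b_k(\theta)} ,
\]
so $A(\theta)$ is row-stochastic (using $\sum_i \hat P_{ij}=b_j$) and conjugation by $\diag(\sqrt{a(\theta)})$ makes it symmetric and positive semidefinite with strictly positive entries (since $K(\theta) > 0$). Perron--Frobenius then gives that $1$ is a simple eigenvalue with one-dimensional eigenspace $\mathrm{span}(1_n)$ and that all other eigenvalues lie in $[0,1)$; a quantitative gap is inherited from the Birkhoff contraction rate $\kappa^2(K(\theta))$ of Lemma~\ref{lem:linearConvergenceX} applied to the linearized iteration. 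Hence $A(\theta)$ is diagonalizable, $(I - A(\theta))^{\sinverse}$ is well-defined, continuous in $\theta$, and inverts $I - A(\theta)$ on the $A(\theta)$-invariant complement $E(\theta)$ of $\mathrm{span}(1_n)$.

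With this in hand, I would pass to the limit in the recursion, relying on the perturbation results announced in Section~\ref{sec:proofPerturb}. Writing $Y_k = 1_n \alpha_k^\top + \tilde Y_k$ with $\alpha_k \in \RR^p$ and $\tilde Y_k$ having columns in $E(\theta)$, the recursion decouples: the contracting component satisfies $\tilde Y_{k+1} = \tilde A_k \tilde Y_k + \tilde B_k$, where $\tilde A_k$ is the restriction of $A_k$ to $E(\theta)$ and has spectral radius uniformly bounded away from $1$. Comparing to the constant-coefficient limit $\tilde Y_{k+1} = \tilde A\, \tilde Y_k + \tilde B$ — whose unique fixed point is the restriction of $(I - A(\theta))^{\sinverse} B(\theta)$ to $E(\theta)$ — by a standard Neumann-series argument for perturbed contractive recursions with linearly converging coefficients yields linear, locally uniform convergence of $\tilde Y_k$ to this limit. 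The drift $\alpha_k$ may itself diverge, but that is harmless: since $\frac{\partial P}{\partial x}\, 1_n = 0$, the drift is invisible to the chain rule for $P_k$.

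To conclude, I would apply the chain rule
\[
    \frac{dP_k}{d\theta}(\theta) = \frac{\partial P(x_k(\theta),\theta)}{\partial x}\, Y_k(\theta) + \frac{\partial P(x_k(\theta),\theta)}{\partial \theta} ,
\]
and use $\frac{\partial P}{\partial x}\, 1_n = 0$ to conclude that only $\tilde Y_k$ contributes; passing to the limit produces the claimed formula with the locally uniform linear rate. Identification with $\frac{d\hat P}{d\theta}(\theta)$ follows from the fact that $\hat P(\theta) = P(\bar x(\theta), \theta)$ is well-defined modulo $\mathrm{span}(1_n)$, combined with the standard result that a locally uniform limit of continuously differentiable maps with locally uniformly convergent derivatives is continuously differentiable, with the limit of the derivatives as its derivative. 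I expect the main obstacle to be the quantitative spectral analysis of $A(\theta)$ — simultaneously diagonalizability, simplicity of $1$, and a uniform-in-$\theta$ gap tied to $\kappa^2(K(\theta))$ — since this is what makes both the spectral pseudo-inverse meaningful and the perturbation step locally uniform in $\theta$.
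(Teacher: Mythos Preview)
Your proposal is correct and matches the paper's strategy on every essential point: the shift-equivariance of $F$ and shift-invariance of $P$, the spectral analysis of $A(\theta)$ via symmetrization by $\diag(\sqrt{a(\theta)})$ and Perron--Frobenius, the elimination of the non-contracting $1_n$-direction using $\frac{\partial P}{\partial x}\,1_n = 0$, a perturbed-contraction argument for the remaining recursion, and the uniform-convergence lemma to identify the limit as $\frac{d\hat P}{d\theta}$. The one tactical difference is in how the $1_n$-direction is removed. You project $Y_k$ onto the fixed $A(\theta)$-invariant complement $E(\theta)$ and track $\tilde Y_k$; the paper instead replaces $A_k$ by the rank-one modification $G_k = A_k - 1_n\, v_k^\top$, where $v_k$ is the (current) left Perron eigenvector of $A_k$, and studies the auxiliary full-space recursion $D_{k+1} = G_k D_k + B_k$, which is legitimate precisely because $\frac{\partial P}{\partial x}A_k = \frac{\partial P}{\partial x}G_k$. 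The paper's device has the mild advantage that each $G_k$ is diagonalizable with the spectrum of $A_k$ minus the eigenvalue $1$, so the perturbation lemma can be stated for matrices on $\RR^n$ without carrying the $\theta$-dependent subspace $E(\theta)$; your projection is otherwise equivalent and arguably more geometric. Either way one lands on $(I-G(\theta))^{-1}B(\theta)$, which becomes $(I-A(\theta))^{\sinverse}B(\theta)$ after left-multiplication by $\frac{\partial P}{\partial x}$.
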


\begin{remark}[Relation to previous works]
The differentiability of the Sinkhorn--Knopp iterations is an elementary and well-known fact, used for example in~\cite{adams2011ranking}, the new contribution here being that the derivatives converge toward the derivative of entropic regularization~\eqref{eq:reg-ot}.
Using an alternative formulation (in the context of implicit differentiation), \cite{eisenberger2022unified} proves the differentiability of the entropic regularization of OT (first part of Theorem~\ref{thm:cvgt}), and obtained an alternative expression of the derivative.
They do not however prove the convergence of the derivatives, that is the main concern of our work and the expression for the derivative in Theorem~\ref{thm:cvgt} was not mentioned in previous literature, to our knowledge.

If $F$ was a strict contraction mapping, applying~\cite[Proposition 1]{gilbert1992automatic} would be sufficient to conclude and obtain the same expression as in Theorem \ref{thm:cvgt} with an inverse instead of the spectral pseudo-inverse.
This is unfortunately not the case, and a more refined analysis is necessary to obtain the convergence. The main intuition behind this analysis is that Sinkhorn iterations are equivariant with respect to scaling of $u = \exp(x)$, and the optimal solution $P$ in \eqref{eq:optimalSolution} is invariant with respect to the same scaling. In terms of derivative, it produces a lack of invertibility of $\frac{\partial F(x,\theta)}{\partial x}$ but the corresponding direction does not depend on $(x,\theta)$, and precisely lies in the kernel of $\frac{\partial P(x,\theta)}{\partial x}$ for all $(x,\theta)$. This ``alignment'' allows to maintain an overall convergence of derivatives. Section~\ref{sec:proof} is dedicated to prove this intuition rigorously.
\end{remark}

\begin{remark}[Limitations of our result]
    Despite the generality of Theorem~\ref{thm:cvgt}, we would like to point out two limitations:
    \begin{enumerate}
        \item We do \emph{not} have any guarantees for the convergence of the derivatives of the iterates $x_k(\theta)$, $k\in \NN$.
        Said otherwise, we have guarantees for the derivatives of the optimal transport plan $P_k$, not for the derivatives of the scaling factors $u_k, v_k$, or the derivatives of the reduced variable $x_k$.
        \item Inspecting the proof of Theorem \ref{thm:cvgt}, the linear convergence factor is a $(\bar{\rho})^{\frac{1}{2}}$ where $\bar{\rho}$ is an upper bound on both the linear convergence factor of the iterates (Lemma \ref{lem:linearConvergenceX}) and the second largest eigenvalue of $\frac{\partial F}{\partial x}$ at the solution, call it $\lambda$. Classical discrete dynamical system arguments (see \cite[Remark 4.5]{peyre2019book} on local linear convergence) suggest that the linear convergence factor of the iterates is asymptotically of order $\lambda$. Taking this into consideration, our proof suggest an asymptotic linear convergence factor of the order $\sqrt{\lambda}$ for the derivatives, a factor strictly greater than that of the sequence. This discrepancy is a consequence of Lemma \ref{lem:realSequence2} which we use for simplicity of the presentation which requires a \emph{non-asymptotic} analysis to ensure uniformity in $\theta$. However, removing uniformity, this could be improved to obtain pointwise an asymptotic linear convergence factor arbitrarily close to $\lambda$ using Lemma \ref{lem:realSequence3} instead, combined with arguments outlined in \cite[Remark 4.5]{peyre2019book}. 
    \end{enumerate}
\end{remark}

\begin{remark}[Application to automatic differentiation of Sinkhorn--Knopp]
    Given $k \in \NN$ and $\dot{\theta} \in \RR^p$, forward automatic differentiation~\cite{wengert1964simple} allows to evaluate $\dot P_k = \frac{dP_k(\theta)}{d\theta} \dot{\theta} \in \RR^{n \times m}$, \eg Jacobian-Vector Products (JVP), just by implementing~\eqref{eq:defRecForward} in a dedicated framework.
    Similarly, given $\bar w_k \in \RR^{n \times m}$, the reverse mode of automatic differentiation~\cite{linnainmaa1976taylor}, also called backpropagation, computes $\bar{\theta}_k^T = \bar{w}_k^T \frac{dP_k(\theta)}{d\theta} \in \RR^p$, \eg a Vector-Jacobian Product (VJP).
    Using a similar argument as in~\cite{bolte2022automatic}, it is possible, thanks to Theorem~\ref{thm:cvgt}, to prove the convergence of these quantities.
    Note that in practice, the object of interest is not necessarly $P_k$ by itself, but its composition by another function, \eg $\langle C(\theta), P_k(\theta)\rangle$ to compute the primal Sinkhorn divergence, $\langle C(\theta), P_k(\theta)\rangle - \Ent(P_k(\theta))$ to compute the OT loss, a sum of similar terms when dealing with Wasserstein barycenters~\cite{agueh2011barycenters}, or any function $L(P_k(\theta))$ where $L: \RR^{n \times m} \to \RR^k$ is a continuously differentiable function.
    Applying our result (Theorem~\ref{thm:cvgt}) and the chain rule leads to the same convergence of automatic differentiation for such quantities.
\end{remark}

\begin{remark}[Differentiation with respect $a$, $b$, $C$ or $\epsilon$]
    Theorem~\ref{thm:cvgt} is presented with an abstract parameterization of the problem with variable $\theta \in \RR^q$. Choosing different values for $\theta$ allows to obtain derivatives of $P_k$ for $k \in \NN$ as well as $\hat{P}$ with respect to the original transport problem data: $a$, $b$, $C$ or $\epsilon$. These are typically evaluated numerically by algorithmic differentiation, but one could get closed form expressions in simple cases. For example choosing $\theta = a$, we have 
    \begin{align*}
        \frac{\partial F(x,\theta)}{\partial a} = \diag\left(\frac{1}{a}\right).
    \end{align*}
    Similarly, setting $\theta = b$, we have 
    \begin{align*}
        \frac{\partial F(x,\theta)}{\partial b} = - \diag\left(\frac{1}{K \frac{b}{K^Te^x}}\right) K \diag\left(\frac{1}{K^Te^x}\right).
    \end{align*}
    One could also compute derivatives with respect to the cost matrix $C$ or $\epsilon$, but the corresponding expressions become more complicated, and the use of automatic differentiation alleviates this difficulty in practice.
\end{remark}

\begin{remark}[Numerical illustration]
    Figure~\ref{fig:linconv} illustrates a simple example where $C$ is an Euclidean cost matrix between two point clouds $X, Y$ in $\RR^2$ of size $n_X = 100$ and $n_Y = 50$.
    The starting point cloud $X$ follows a uniform law in the square $[-1/2,1/2]$ and the target $Y$ a uniform law on a circle inscribed in the square.
    The marginals are two uniform histograms $a = 1_n/n$ and $b = 1_m/m$.
    Sinkhorn--Knopp algorithm~\eqref{eq:defRecForward} is automatically differentiated with the Python library \texttt{jax}~\cite{jax2018github} with respect to the parameter $\epsilon$, and we record the median of 10 trials for $\epsilon = 10^{-2}$.
    The blue filled area represents the first and last deciles.
    We run the algorithm for a high number of iterations $N_{\text{it}}$ and display both
    \[
        \left\| P_k(\epsilon) - \sol(\epsilon) \right\|
        \quad \text{and} \quad
        \left\| \frac{dP_k}{d \epsilon}(\epsilon) - \frac{d \sol}{d \epsilon}(\epsilon) \right\|.
    \]
    Note we assume here that $P_{N_{\text{it}}}(\epsilon)$ (resp. $\frac{dP_{N_{\text{it}}}}{d \epsilon}(\epsilon)$) is close enough the optimal solution $\sol(\epsilon)$ (resp.  $\frac{d \sol}{d \epsilon}(\epsilon)$) such that it is a good proxy. In particular, we ran~\eqref{eq:defRecForward} up to machine precision.
\end{remark}

\section{Proof of Theorem~\ref{thm:cvgt}}\label{sec:proof}
Before diving into the proof, we are going to provide important spectral properties of the Jacobians of the algorithm and transport plan (Section~\ref{sec:eigentransport}), then introduce a proxy $G$ for the Jacobian of $F$ that is a contraction mapping in contrast of $\frac{dF}{dx}$ (Section~\ref{sec:defg}) and finally rewrite~\eqref{eq:totalDerivativeP} thanks to $G$ (Section~\ref{sec:prelecomp}).

\subsection{Eigendecomposition of the transport plan and Jacobian}\label{sec:eigentransport}
The following lemma provides important properties of the Jacobians of $P$ and $F$ as a function of $x$. Here $\theta$ is fixed and we look at properties of the derivative with respect to $x$, hence the dependency in $\theta$ does not appear.
\begin{lemma}[Expression of the Jacobian of $F(x)$]
    \label{lem:computeJacobian}
    Let $x \in \RR^n$.
    \begin{enumerate}
        \item We have $\frac{dP(x)}{dx}1_n = 0_{n \times m}$, where the product is described in Section \ref{sec:derivativeOptimalPlan}. 
        \item The Jacobian $\frac{dF(x)}{dx}$ of $F$ reads
        \begin{align*}
            \frac{dF(x)}{dx} &=
            \diag\left( \frac{1}{K\left( \frac{b}{K^Te^x} \right)} \right) K \diag\left( \frac{b}{\left( K^Te^x \right)^2}  \right)K^T \diag\left( e^x \right)\\
            &=
             \diag\left(e^{F(x)}\right)\diag\left(\frac{1}{a \odot e^x}\right) P(x)\diag\left(\frac{1}{b}\right) P^T(x).
        \end{align*}
    \end{enumerate}
\end{lemma}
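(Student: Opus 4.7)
My plan is to prove the two claims independently, the first by exploiting a scaling invariance of $P$ in the $1_n$-direction, and the second by a direct chain-rule computation followed by an algebraic rewriting that introduces $P(x)$ and $P^T(x)$.

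For the first claim, the key observation is that the map $x \mapsto P(x)$ defined in~\eqref{eq:optimalSolution} is invariant under the translation $x \mapsto x + t\, 1_n$ for any $t \in \RR$. Indeed, $e^{x + t\, 1_n} = e^t \, e^x$, hence $\diag(e^{x + t\, 1_n}) = e^t \diag(e^x)$ and $K^T e^{x + t\, 1_n} = e^t K^T e^x$, so the second diagonal factor $\diag(b/(K^T e^x))$ gets scaled by $e^{-t}$. The two scalings cancel, giving $P(x + t\, 1_n) = P(x)$. Differentiating at $t = 0$ yields $\frac{\partial P(x)}{\partial x} 1_n = 0_{n \times m}$, which is exactly the statement of (1) under the multiplication convention of Section~\ref{sec:derivativeOptimalPlan}.

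For the second claim, I would decompose $F$ as a composition: set $g(x) = K^T e^x$, $h(x) = b / g(x)$, $q(x) = K h(x)$, so that $F(x) = \log(a) - \log(q(x))$. A straightforward chain rule gives $\frac{dg}{dx} = K^T \diag(e^x)$, $\frac{dh}{dx} = -\diag(b/(K^T e^x)^2) K^T \diag(e^x)$, and then
\begin{equation*}
    \frac{dF(x)}{dx} = -\diag\!\left(\tfrac{1}{q(x)}\right) K \frac{dh}{dx} = \diag\!\left(\tfrac{1}{K(b/(K^T e^x))}\right) K \diag\!\left(\tfrac{b}{(K^T e^x)^2}\right) K^T \diag(e^x) ,
\end{equation*}
which is exactly the first displayed expression. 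The second expression is obtained by re-expressing the middle factors using the identities
\begin{equation*}
    P(x) = \diag(e^x) K \diag\!\left(\tfrac{b}{K^T e^x}\right), \qquad P^T(x) = \diag\!\left(\tfrac{b}{K^T e^x}\right) K^T \diag(e^x) ,
\end{equation*}
which imply $K \diag(b/(K^T e^x)) = \diag(1/e^x) P(x)$ and $\diag(1/(K^T e^x)) K^T \diag(e^x) = \diag(1/b) P^T(x)$. Splitting $\diag(b/(K^T e^x)^2) = \diag(b/(K^T e^x)) \diag(1/(K^T e^x))$ and inserting these two identities produces $\diag(1/q) \diag(1/e^x) P(x) \diag(1/b) P^T(x)$. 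Finally, from $q(x) = a \odot e^{-F(x)}$, one has $1/q = e^{F(x)}/a$, and absorbing $\diag(1/a)$ and $\diag(1/e^x)$ into $\diag(1/(a \odot e^x))$ gives the second announced formula.

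The whole lemma is essentially a bookkeeping exercise; there is no conceptual obstacle. The only mildly delicate step is keeping track of how diagonal matrices commute with and factor through $K$, $K^T$, and $P(x)$; organising the rewriting via the identities for $P(x)$ and $P^T(x)$ above is what makes the second form fall out cleanly and prepares the ground for the spectral analysis of $\frac{dF}{dx}$ carried out in Section~\ref{sec:proof}.
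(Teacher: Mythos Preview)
Your proposal is correct and follows essentially the same approach as the paper: the invariance $P(x+\lambda 1_n)=P(x)$ differentiated at $\lambda=0$ for part (1), and a direct chain-rule computation followed by the same algebraic rewriting via the identities for $P(x)$ and $P^T(x)$ for part (2). The only cosmetic difference is that you name the intermediate maps $g,h,q$, whereas the paper writes the composites out in full.
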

\begin{proof}\hfill
   \begin{enumerate}
       \item We note that $P(x + \lambda 1_n) = P(x)$ for all $\lambda \in \RR$ so that $(P(x + \lambda 1_n) - P(x))/ \lambda = \lambda \frac{dP(x)}{dx} 1_n + o(\lambda) = 0$. This implies that $\frac{dP(x)}{dx} 1_n = 0$.
       \item The first expression is a direct computation observing that if $f: \RR^n \to \RR^n$ is an entry-wise function, then $J_f(x) = \diag(f'(x))$ where $f'$ is again applied entry-wise.
       Indeed, we have for $x \in \RR^n$, $\frac{d e^x}{dx} = \diag(e^x)$, which in turns gives $\frac{d K^T e^x}{dx} = K^T \diag(e^x)$.
       Then, we obtain the derivatives of the ratio
       \[
            \frac{d \ofrac{b}{K^T e^x}}{dx}(x) = - \diag\left(\ofrac{b}{(K^T e^x)^2}\right) K^T \diag(e^x) .
       \]
       Similarly, since $K$ is a linear operator,
       \[
        \frac{d \left(K\ofrac{b}{K^T e^x}\right)}{dx}(x) = - K \diag\left(\ofrac{b}{(K^T e^x)^2}\right) K^T \diag(e^x) .
       \]
       Finally, since $\frac{d \log(g(x))}{dx} = \frac{d g(x)}{dx} \odot \frac{1}{g(x)}$, for a differentiable $g: \RR^n \to \RR^n$, we obtain that
       \[\frac{dF(x)}{dx} =
            \diag\left( \frac{1}{K\left( \ofrac{b}{K^Te^x} \right)} \right) K \diag\left( \ofrac{b}{\left( K^Te^x \right)^2}  \right)K^T \diag\left( e^x \right) .
        \]
       The second expression uses the definition of $P$ in \eqref{eq:optimalSolution}.
       Observe that
       \begin{equation}\label{eq:calc-DF-one}
        \diag\left( \ofrac{b}{\left( K^Te^x \right)^2}  \right)K^T \diag\left( e^x \right)
        =
        \diag\left(\frac{1}{K^Te^x}\right)P^T(x) ,           
       \end{equation}
       and (using the fact that diagonal matrices commute)
       \begin{equation}\label{eq:calc-DF-two}
        \diag(e^{F(x)})
        =
        \diag\left( \frac{1}{K\left( \ofrac{b}{K^Te^x} \right)} \right) \diag(a) .
       \end{equation}
       Observe now that,
       \begin{equation}\label{eq:calc-DF-three}
        K \diag\left(\frac{1}{K^Te^x}\right)
        =
        \diag\left(\frac{1}{e^x}\right) P(x) \diag\left(\frac{1}{b}\right) .
       \end{equation}
       Combining~\eqref{eq:calc-DF-one}, \eqref{eq:calc-DF-two} and \eqref{eq:calc-DF-three} gives the result.
   \end{enumerate}
\end{proof}
\begin{remark}
    \label{rem:jacobianF}
    If $x = F(x)$, at a fixed point solution, the Jacobian expression in Lemma~\ref{lem:computeJacobian} can be simplified as follows
        \begin{align*}
            \frac{dF(x)}{dx} =
            \diag\left(\frac{1}{a}\right) P(x)\diag\left(\frac{1}{b}\right) P^T(x).
        \end{align*}
\end{remark}
We have the following result on the eigenvalues and eigenvectors of $\frac{dF}{dx}$.
\begin{lemma}[Eigendecomposition of $\frac{dF}{dx}$]
    For any $x$, $\frac{dF(x)}{dx}$ is diagonalisable on $\RR$. $1$ is an eigenvalue with multiplicity $1$ and the other eigenvalue have modulus strictly smaller than $1$. Furthermore, one has the following eigenvectors:
    \begin{align*}
        \frac{dF(x)}{dx} 1_n &= 1_n \\
        \left(\frac{dF(x)}{dx}\right)^T \frac{a \odot e^x}{e^{F(x)}}&= \frac{a \odot e^x}{e^{F(x)}} 
    \end{align*}
    \label{lem:jacobianDiagonalizable}
\end{lemma}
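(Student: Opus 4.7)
My plan is to work from the second expression for the Jacobian given in Lemma~\ref{lem:computeJacobian}, namely
\[
M \;:=\; \frac{dF(x)}{dx} \;=\; \diag(u)\, P(x)\, \diag(1/b)\, P(x)^T,
\qquad u := \frac{e^{F(x)}}{a \odot e^x} > 0,
\]
and to establish the four claims (the two eigenvector identities, diagonalizability on $\RR$, simple unit eigenvalue, spectral gap) in the following order.

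\textbf{Step 1 (the two eigenvector identities).} These are direct computations once one records two elementary identities for $P(x)$: first $P(x)^T 1_n = b$ (sum the columns of $P$ defined in \eqref{eq:optimalSolution}), and second $P(x) 1_m = e^x \odot K(b/(K^T e^x)) = a \odot e^x / e^{F(x)}$ (using the definition of $F$, which gives $K(b/(K^Te^x)) = a/e^{F(x)}$). Applying $M$ to $1_n$ and telescoping through $\diag(1/b) P^T 1_n = 1_m$, then $P 1_m = a \odot e^x/e^{F(x)}$, then $\diag(u)(a\odot e^x/e^{F(x)}) = 1_n$ yields $M 1_n = 1_n$. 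The transpose computation is the same telescoping performed from the opposite side and gives $M^T(a\odot e^x/e^{F(x)}) = a\odot e^x/e^{F(x)}$.

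\textbf{Step 2 (diagonalizability with real nonnegative eigenvalues).} Since $u > 0$, the similarity $S := \diag(\sqrt{u})$ is well defined and invertible, and a direct computation gives
\[
S^{-1} M S \;=\; \diag(\sqrt{u})\, P(x)\, \diag(1/b)\, P(x)^T\, \diag(\sqrt{u}) \;=\; A A^T,
\]
with $A := \diag(\sqrt{u})\, P(x)\, \diag(1/\sqrt{b})$. Hence $M$ is similar to a symmetric positive semidefinite matrix, so its eigenvalues are real nonnegative and it is diagonalizable over $\RR$.

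\textbf{Step 3 (simplicity of $1$ and strict spectral gap).} All entries of $K(\theta)$ are strictly positive by \eqref{eq:sol-form}, and $b, e^x$ are strictly positive, so $P(x)$ is entrywise positive; therefore $P(x)\diag(1/b)P(x)^T$ is entrywise positive, and multiplying on the left by $\diag(u)$ with $u > 0$ preserves positivity, so $M$ is entrywise positive. Combined with Step~1, $M$ is a positive row-stochastic matrix. The Perron--Frobenius theorem for (strictly) positive matrices then gives that the spectral radius is a simple eigenvalue and the unique eigenvalue of maximal modulus; since $1$ is already an eigenvalue and the spectral radius of a row-stochastic matrix equals $1$, we conclude that $1$ is simple and every other eigenvalue has modulus strictly less than $1$.

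There is no real obstacle; the only subtlety is the well-chosen similarity in Step~2, which uses the fact that the diagonal factor $e^{F(x)}/(a\odot e^x)$ in front of the symmetric block $P\diag(1/b)P^T$ is strictly positive, so that $\sqrt{u}$ makes sense and the conjugation produces the PSD matrix $AA^T$. Everything else is a telescoping of the identities $P 1_m = a\odot e^x/e^{F(x)}$ and $P^T 1_n = b$ together with standard Perron--Frobenius.
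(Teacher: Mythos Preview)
Your proof is correct and follows essentially the same route as the paper: a diagonal similarity that symmetrizes $\frac{dF(x)}{dx}$, the marginal identities $P(x)^T 1_n = b$ and $P(x)1_m = a\odot e^x/e^{F(x)}$ for the two eigenvector claims, and Perron--Frobenius applied to the positive row-stochastic matrix for the spectral gap. The only cosmetic difference is that you work from the $P$-based expression of Lemma~\ref{lem:computeJacobian} and conjugate by $\diag(\sqrt{u})$, whereas the paper works from the $K$-based expression and conjugates by $(TS^{-1})^{1/2}$; since $TS^{-1} = \diag(a\odot e^x/e^{F(x)}) = \diag(1/u)$, these are the same similarity, and your extra observation that the symmetrized matrix has the form $AA^T$ (hence PSD) is a small bonus the paper does not record.
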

\begin{proof}
    Fix $x \in \RR^n$ and let 
    \begin{equation*}
				S = \diag\left( \ofrac{1}{K\left( \ofrac{b}{K^Te^x} \right)} \right), \,
				M =K \diag\left( \ofrac{b}{\left( K^Te^x \right)^2}  \right)K^T, \, \text{ and } \,
				T = \diag\left( e^x \right).
    \end{equation*}
    The matrices $S$ and $T$ are diagonal with positive entries and $M$ is symmetric such that $SMT = \frac{dF(x)}{dx}$. Setting $A= (TS^{-1})^{1/2}$, we have, using the fact that diagonal matrix commute
    \begin{align*}
        A S M T A^{-1} &= T^{\frac{1}{2}}S^{-\frac{1}{2}}SMT S^{\frac{1}{2}} T^{-\frac{1}{2}} \\
        &= T^{\frac{1}{2}}S^{\frac{1}{2}}MS^{\frac{1}{2}} T^{\frac{1}{2}},
    \end{align*}
    and therefore $ A \frac{dF(x)}{dx} A^{-1}$ is real symmetric, hence diagonalisable with real eigenvalues.
    As a consequence, $\frac{dF(x)}{dx}$ being similar to $ A \frac{dF(x)}{dx} A^{-1}$ it has the same property.
    It is an easy calculation to check that $\frac{dF(x)}{dx} 1_n = 1_n$.
    Indeed, $T 1_n = e^x$, and since $\diag(y)x = y \odot x$ for $x,y \in \RR^n$, we have that $M e^x = K \ofrac{b}{K^T e^x}$ and then $S K \ofrac{b}{K^T e^x} = 1_n$.
    Multiplicity of the eigenvalue $1$ as well as properties of the remaining eigenvalue is a consequence of Perron--Frobenius theorem~\cite[Theorem 8.2.8 and Theorem 8.3.4]{horn2013matrix} applied to the stochastic matrix $\frac{dF(x)}{dx}$.
    
    Let us prove the last identity. We have
    \begin{align*}
        e^{F(x)} &= \ofrac{a}{K\left(\ofrac{b}{K^Te^x}\right)},\\
        P(x) 1_m &= \diag(e^x) K \left(\ofrac{b}{K^Te^x}\right) = \ofrac{a \odot e^x}{e^{F(x)}}, \\
        P(x)^T 1_n &=  \ofrac{b}{K^Te^x} \odot K^T e^x = b,
    \end{align*}
    from which we deduce
        \begin{align*}
        &\left(\frac{dF(x)}{dx}\right)^T \ofrac{a \odot e^x}{e^{F(x)}}\\
        &=
         P(x)\diag\left(\frac{1}{b}\right) P^T(x) \diag\left(\ofrac{e^{F(x)}}{(a \odot e^x})\right) \ofrac{a \odot e^x}{e^{F(x)}} \\
         &=  P(x)\diag\left(\frac{1}{b}\right) P^T(x) 1_n \\
         &=  P(x)1_m\\ 
         &=\ofrac{a \odot e^x}{e^{F(x)}}.
    \end{align*}
    This concludes the proof.
\end{proof}

\subsection{Reduced partial Jacobian of $F$}\label{sec:defg}
For any $(x,\theta) \in \RR^n \times \RR^p$, we set 
\begin{align}
    \alpha(x, \theta) &= 1_n^T\left(\ofrac{a(\theta) \odot e^{x}}{e^{F(x,\theta)}} \right) \nonumber \\
    v(x,\theta) &= \frac{1}{\alpha(x,\theta)} \ofrac{a(\theta) \odot e^{x}}{e^{F(x,\theta)}}.
    \label{eq:eigenvectorsJacobian}
\end{align}
For any $x,\theta$ consider furthermore the block decomposition of the total derivative of $F$,
$[A(x,\theta)\  B(x,\theta)] = J_F(x,\theta)$ and set
\begin{align}
    G(x,\theta) = A(x,\theta) - 1_n v(x,\theta)^T.
    \label{eq:reducedJacobian}
\end{align}
We call $G$ the reduced partial Jacobian of $F$.
From Lemma \ref{lem:jacobianDiagonalizable}, we have that $1_n$ is an eigenvector of $A(x,\theta)$ and $v(x,\theta)$ is an eigenvector of $A(x,\theta)^T$, both with eigenvalue $1$, which has multiplicity $1$, with $1_n^Tv(x,\theta) = 1$. Therefore Lemma \ref{lem:diagonalisableRankOnePerturbation} ensures that the matrix $G(x,\theta)$ is diagonalisable in the same basis as $A(x,\theta)$ with the same eigenvalues, except eigenvalue $1$ which is set to $0$, and therefore its spectral radius is strictly less than $1$.
Later in the proof, we will study a recursion involving $A$ (which is not a contraction), and we will use an equivalent recurrence involving $G$ (which is a contraction).
By Assumption \ref{ass:mainAssumption}, the functions $J_F, P, A,B,G$ are continuously differentiable on $\RR^n \times \Omega$.

The following lemma shows that $J_F$ and $G$ are invariant by the centering operation $\cent$, and more generally by translation of $\lambda 1_n$.
\begin{lemma}[Invariance by centering]
    For all $\lambda \in \RR$, $x \in \RR^n$, and $\theta \in \Omega$, we have,
    \begin{align*}
        F(x + \lambda 1_n, \theta) &= F(x, \theta) + \lambda 1_n ,\\
        J_F(x + \lambda 1_n,\theta) &= J_F(x,\theta) ,\\
        v(x + \lambda 1_n,\theta) &= v(x,\theta) ,\\
        G(x + \lambda 1_n,\theta) &= G(x,\theta).
    \end{align*}
    In particular, $J_F(\cent(x),\theta) = J_F(x,\theta)$ and $G(\cent(x),\theta) = G(x,\theta)$ where $\cent$ is the centering operator introduced in Lemma \ref{lem:linearConvergenceX}.
    \label{lem:jacobianInvariance}
\end{lemma}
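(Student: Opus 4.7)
The plan is to prove the four identities in order, each following immediately from the previous ones, and then to deduce the centering statement as the special case $\lambda = -\tfrac{1}{n}1_n^T x$.

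For the first identity, the key observation is the positive homogeneity $e^{x+\lambda 1_n} = e^\lambda e^x$. Plugging this into the definition
\[
    F(x,\theta) = \log(a(\theta)) - \log\!\left(K(\theta)\frac{b(\theta)}{K(\theta)^T e^x}\right),
\]
the factor $e^\lambda$ pulls out of $K(\theta)^T e^x$, inverts inside the ratio $b/(K^T e^x)$, pulls out of the $K$ application, and finally produces an additive $-\lambda$ in the outer logarithm. Gathering terms yields $F(x+\lambda 1_n,\theta) = F(x,\theta) + \lambda 1_n$.

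For the second identity, fix $\lambda$ and differentiate the identity just established in both variables. The map $x \mapsto x + \lambda 1_n$ has Jacobian $I$, so the chain rule gives $\tfrac{\partial F}{\partial x}(x+\lambda 1_n,\theta) = \tfrac{\partial F}{\partial x}(x,\theta)$; the $\theta$-derivative of the additive term $\lambda 1_n$ vanishes, giving $\tfrac{\partial F}{\partial \theta}(x+\lambda 1_n,\theta) = \tfrac{\partial F}{\partial \theta}(x,\theta)$. Hence $J_F$, and in particular the block $A$, is invariant. For the third identity, use $e^{x+\lambda 1_n} = e^\lambda e^x$ in the numerator and $e^{F(x+\lambda 1_n,\theta)} = e^\lambda e^{F(x,\theta)}$ (from the first identity) in the denominator of $\tfrac{a\odot e^x}{e^{F(x,\theta)}}$; the two $e^\lambda$ factors cancel, so both this vector and its coordinate sum $\alpha(x,\theta)$ are invariant, and thus so is $v$. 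The fourth identity $G(x+\lambda 1_n,\theta) = G(x,\theta)$ then follows by combining invariance of $A$ and $v$ in the definition $G = A - 1_n v^T$.

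Finally, since $\cent(x) = x + \lambda 1_n$ with $\lambda = -\tfrac{1}{n}1_n^T x$, the specialized claims $J_F(\cent(x),\theta) = J_F(x,\theta)$ and $G(\cent(x),\theta) = G(x,\theta)$ are immediate. There is no real obstacle here: the lemma is a bookkeeping exercise about how the scalar factor $e^\lambda$ cancels throughout the Sinkhorn map. The only point requiring care is the chain-rule step for $J_F$, where one must treat $\lambda$ as a fixed parameter (not a variable) so that differentiating the identity in $(x,\theta)$ is well defined.
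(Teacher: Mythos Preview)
Your proposal is correct and follows essentially the same approach as the paper: both arguments pull the scalar $e^\lambda$ through the definition of $F$ to obtain the first identity, deduce invariance of $J_F$ by differentiating that identity, cancel $e^\lambda$ in numerator and denominator to get invariance of $v$, and combine these for $G$. Your write-up is slightly more explicit about the chain-rule step for $J_F$, whereas the paper simply states that the Jacobian identity follows from the first; otherwise the two proofs are identical in structure and content.
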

\begin{proof}
    We have for $\lambda \in \RR$ and $x \in \RR^n$,
    \begin{align*}
        F(x + \lambda 1_n, \theta) &= 
        \log(a(\theta)) - \log\left( K(\theta)\left( \ofrac{b(\theta)}{K(\theta)^Te^{x + \lambda 1_n}} \right) \right) \\
        &= \log(a(\theta)) - \log\left( K(\theta)\left( \ofrac{b(\theta)}{e^\lambda K(\theta)^Te^{x}} \right) \right) \\
         &= \log(a(\theta)) - \log\left(e^{-\lambda} K(\theta)\left( \ofrac{b(\theta)}{ K(\theta)^Te^{x}} \right) \right) \\
        &=\log(a(\theta)) + \lambda 1_n - \log\left( K(\theta)\left( \ofrac{b(\theta)}{ K(\theta)^Te^{x}} \right) \right) \\
        &= F(x, \theta) + \lambda 1_n ,
    \end{align*}
    which implies for all $\lambda \in \RR$, $J_F(x + \lambda 1_n,\theta) = J_F(x,\theta)$.
    Observe now that
    \begin{align*}
        \ofrac{a(\theta) \odot e^{x + \lambda 1_n}}{e^{F(x+\lambda 1_n,\theta)}}
        &=
        \ofrac{a(\theta) \odot e^{\lambda}e^{x}}{e^{F(x,\theta) + \lambda 1_n}} \\
        &=
        \ofrac{a(\theta) \odot e^{\lambda}e^{x}}{e^\lambda e^{F(x,\theta)}} \\
        &=
        \ofrac{a(\theta) \odot e^{x}}{e^{F(x,\theta)}} .
    \end{align*}
    Thus,
    $\alpha(x + \lambda 1_n, \theta) = \alpha(x, \theta)$ and in turn, we get that $v(x + \lambda 1_n,\theta) = v(x,\theta)$.

    To conclude, we have
    \begin{align*}
        G(x + \lambda 1_n, \theta)
        &= A(x + \lambda 1_n,\theta) - 1_n v(x + \lambda 1_n,\theta)^T \\
        &= A(x, \theta) - 1_n v(x, \theta)^T = G(x, \theta),
    \end{align*}
    following the fact that $J_F(x + \lambda 1_n,\theta) = J_F(x,\theta)$, and in particular $A(x + \lambda 1_n,\theta) = A(x,\theta)$.
\end{proof}

\subsection{Preliminary computation}\label{sec:prelecomp}
We start with some computation and notations before providing the proof arguments.
Setting for all $k \in \NN$, and $\theta \in \RR^p$, $[A_k(\theta)\  B_k(\theta)] = J_F(x_k(\theta),\theta)$ we have the piggyback recursion
\begin{align}
    \label{def:eqDefAutodiffx}
    \frac{d x_{k+1}(\theta)}{d \theta} = A_k(\theta) \frac{d x_{k}(\theta)}{d \theta} + B_k(\theta),
\end{align}
We have for all $k$ and $\theta$, using \eqref{eq:totalDerivativeP} for the total derivative of $P$,
\begin{align}
    \frac{d P_{k+1}(\theta)}{d \theta} &= \frac{\partial P(x_{k+1}(\theta),\theta)}{\partial x} \frac{dx_{k+1}(\theta)}{d\theta} + \frac{\partial P(x_{k+1}(\theta),\theta)}{\partial \theta}\nonumber\\
    &=\frac{\partial P(x_{k+1}(\theta),\theta)}{\partial x} \left(A_k(\theta) \frac{dx_{k}(\theta)}{d\theta} +B_k(\theta) \right)+ \frac{\partial P(x_{k+1}(\theta),\theta)}{\partial \theta}.
    \label{eq:defAutodiffP}
\end{align}

For all $\theta$ and all $k \in \NN$, we have $A_k(\theta) = A(x_k(\theta),\theta)$, we set
\begin{align*}
    G_k(\theta)= G(x_k(\theta),\theta) = A_k(\theta) - 1_n v(x_k(\theta),\theta)^T,
\end{align*}
where $G$ is defined in \eqref{eq:reducedJacobian} and $v$ is defined in \eqref{eq:eigenvectorsJacobian}.
From Lemma \ref{lem:diagonalisableRankOnePerturbation}, the matrix $G_k(\theta)$ is diagonalisable in the same basis as $A_k(\theta)$ with the same eigenvalues except eigenvalue $1$ which is set to $0$ and therefore its spectral radius is strictly less than $1$.

From Lemma \ref{lem:computeJacobian}, we have $\frac{\partial P(x,\theta)}{\partial x} 1_n = 0_{n \times m}$ for all $(x,\theta)$ and therefore
\begin{align*}
    \frac{\partial P(x,\theta)}{\partial x}G_k(\theta) &=  \frac{\partial P(x,\theta)}{\partial x}A_k(\theta) - \frac{\partial P(x,\theta)}{\partial x}1_n v(x_k(\theta),\theta)^T= \frac{\partial P(x,\theta)}{\partial x}A_k(\theta).
\end{align*}
Plugging this in \eqref{eq:defAutodiffP}, we obtain
\begin{align*}
    \frac{d P_{k+1}(\theta)}{d \theta} &=\frac{\partial P(x_{k+1},\theta)}{\partial x} \left(A_k(\theta) \frac{dx_{k}}{d\theta} +B_k(\theta) \right)+ \frac{\partial P(x_{k+1},\theta)}{\partial \theta}\\
    &=\frac{\partial P(x_{k+1},\theta)}{\partial x} \left(G_k(\theta) \frac{dx_{k}}{d\theta} +B_k(\theta) \right)+ \frac{\partial P(x_{k+1},\theta)}{\partial \theta}.
\end{align*}
This allows to rewrite the iterations equivalently, with $D_0 = \frac{d x_0}{d\theta}$, for all $k \geq 0$ and $\theta$, using the product rule for partial derivatives of $P$ defined in Section \ref{sec:derivativeOptimalPlan},
\begin{align}
    \frac{d P_k(\theta)}{d \theta} &= \frac{\partial P(x_{k},\theta)}{\partial x}D_k(\theta) + \frac{\partial P(x_{k},\theta)}{\partial \theta},\nonumber\\
    D_{k+1}(\theta) &= G_k(\theta) D_k(\theta) + B_k(\theta).
    \label{eq:autodiffModified}
\end{align}

We are now ready to prove our main result.
\begin{proof}[Proof of Theorem~\ref{thm:cvgt}]\hfill

\textbf{Convergence of $A_k$, $G_k$ and $B_k$.}
For all $\theta \in \Omega$, from Lemma \ref{lem:linearConvergenceX}, the centered iterates $(\cent(x_k(\theta)))_{k \in \NN}$ converge with a linear rate to $\cent(\bar{x}(\theta))$ which is locally uniform in $\theta$. Furthermore, using Assumption \ref{ass:mainAssumption}, $F$ is twice continuously differentiable jointly in $x \in \RR^n$ and $\theta \in \Omega$ and therefore $J_F$ and $G$ are continuously differrentiable, and hence locally Lipschitz on $\RR^n \times \Omega$. 

We remark that for all $\theta$, using Lemma \ref{lem:jacobianInvariance}
\begin{align*}
    G_k(\theta) &= G(x_k(\theta),\theta) = G(\cent(x_k(\theta)),\theta), 
\end{align*}
so that, as $k \to \infty$, $G_k(\theta)$ converges with a locally uniform linear rate to $G(\theta) := G(\cent(\bar{x}(\theta)),\theta) = G(\bar{x}(\theta),\theta)$. Similarly 
$B_k(\theta)$ converges with a locally uniform linear rate to $B(\theta):= B(\bar{x}(\theta),\theta)$ and $A_k(\theta)$ converges with a locally uniform linear rate to $A(\theta):= A(\bar{x}(\theta),\theta)$. Note that by Lemma \ref{lem:linearConvergenceX}, the map $\theta \mapsto \cent(\bar{x}(\theta))$ is continuous, so that $A$, $G$ and $B$ are continuous functions of $\theta$.

For any $\theta$, $G(\theta)$ is digaonalizable with spectral radius strictly less than $1$, the recursion on $D_k(\theta)$ should converge with a locally uniformly linear rate in $\theta$. This assertion is a consequence of the following lemma which explicit the constants appearing in the linear rate for the matrix recursion.
\begin{lemma}[Explicit rate for linear convergence]\label{lem:perturbedConvergenceBis}
  Let $\rho < 1$ and $\bar G \in \RR^{n \times n}$ be diagonalisable on $\RR$, with spectral radius smaller than $\rho$ and and $Q$ an invertible matrix which rows are made of an eigenbasis of $\bar{G}$. Let $\bar B \in \RR^{n \times m}$.
  Let $(G_k)_{k \in \NN}$ and $(B_k)_{k \in \NN}$ be sequences of matrices such that there exists a constant $c_1>0$ such that for all $k \in \NN$,
  \begin{align}
    \| G_k - \bar G \|_{\mathrm{op}}  &\leq c_1 \rho^{k+1}, \label{eq:lemmaErrorHypG} \\
    \| B_k - \bar B \|  &\leq c_1 \rho^{k+1}. \label{eq:lemmaErrorHypB}
  \end{align}
  Then, for the recursion
  \begin{align*}
    D_{k+1} = G_k D_k + B_k ,
  \end{align*}
  setting $\bar{D} = (I - \bar{G})^{-1} \bar B$, there exists a continuous function $\const \colon \Rzpos^5 \times (0,1) \to \Rzpos$ such that for all $k \in \NN$,
  \begin{align*}
      \|D_k - \bar{D} \| \leq \rho^{\frac{k}{2}} \const(\|Q\|_\mathrm{op}, \|Q^{-1}\|_\mathrm{op}, c_1, \|D_0\|, \|\bar{B}\|, \rho).
  \end{align*}
\end{lemma}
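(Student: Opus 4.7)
My plan is to reduce the matrix recursion to a scalar one by diagonalizing the limit operator $\bar G$ and then to invoke Lemma \ref{lem:realSequence2}. First, I would introduce the error $E_k \eqdef D_k - \bar D$. Using the fixed-point identity $\bar D = \bar G \bar D + \bar B$, a direct calculation gives the affine recursion
\begin{align*}
    E_{k+1} = G_k E_k + \Delta_k, \qquad \Delta_k \eqdef (G_k - \bar G)\bar D + (B_k - \bar B),
\end{align*}
and the hypotheses \eqref{eq:lemmaErrorHypG}--\eqref{eq:lemmaErrorHypB} yield $\|\Delta_k\| \leq c_1 \rho^{k+1}(\|\bar D\| + 1)$. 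The norm $\|\bar D\|$ is bounded by $\|(I - \bar G)^{-1}\|_{\mathrm{op}} \|\bar B\|$, and the eigendecomposition gives $\|(I - \bar G)^{-1}\|_{\mathrm{op}} \leq \|Q\|_{\mathrm{op}} \|Q^{-1}\|_{\mathrm{op}} / (1 - \rho)$, so $\|\bar D\|$ is controlled by the six parameters listed in the statement; similarly $\|E_0\| \leq \|D_0\| + \|\bar D\|$.

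Next, since the rows of $Q$ form an eigenbasis of $\bar G$, one has $Q \bar G Q^{-1} = D_\lambda$ with $\|D_\lambda\|_{\mathrm{op}} \leq \rho$. Setting $\tilde E_k \eqdef Q E_k$ and $R_k \eqdef Q(G_k - \bar G)Q^{-1}$, the recursion becomes
\begin{align*}
    \tilde E_{k+1} = (D_\lambda + R_k)\tilde E_k + Q\Delta_k,
\end{align*}
with $\|R_k\|_{\mathrm{op}} \leq \|Q\|_{\mathrm{op}}\|Q^{-1}\|_{\mathrm{op}} c_1 \rho^{k+1}$. Taking operator norms, the scalar sequence $a_k \eqdef \|\tilde E_k\|_{\mathrm{op}}$ satisfies a bound of the form
\begin{align*}
    a_{k+1} \leq (\rho + C_1 \rho^{k+1}) a_k + C_2 \rho^{k+1},
\end{align*}
where $C_1$ and $C_2$ depend continuously on $(\|Q\|_{\mathrm{op}}, \|Q^{-1}\|_{\mathrm{op}}, c_1, \|\bar B\|, \rho)$. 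This is exactly the regime handled by Lemma \ref{lem:realSequence2}, which delivers $a_k \leq \rho^{k/2}\,\const(\cdot)$. Undoing the change of basis via $\|E_k\| \leq \|Q^{-1}\|_{\mathrm{op}} a_k$ then concludes the proof, with $a_0$ bounded through $\|\tilde E_0\| \leq \|Q\|_{\mathrm{op}}(\|D_0\| + \|\bar D\|)$.

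The main conceptual point, and the reason the rate degrades from $\rho^k$ to $\rho^{k/2}$, is the following. The convergent infinite product $\prod_{j \geq 0}(1 + C_1 \rho^j) < \infty$ absorbs the time-varying multiplicative perturbation of $D_\lambda$, but iterating the additive forcing $C_2 \rho^{k+1}$ against the contraction of rate $\rho$ creates a polynomial-in-$k$ prefactor of the form $k \rho^k$. Absorbing this polynomial into the exponential in a non-asymptotic, parameter-uniform manner forces the rate $\rho^{k/2}$; this is precisely what Lemma \ref{lem:realSequence2} is designed to do, and it is the only real obstacle. All remaining work is bookkeeping to verify that $\|\bar D\|$ and $\|\bar G\|_{\mathrm{op}}$ can be eliminated in favor of the six listed parameters, which is immediate from the eigendecomposition bounds recalled above.
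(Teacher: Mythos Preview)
Your argument is correct and coincides with the paper's proof: diagonalize $\bar G$ via conjugation by $Q$, reduce to a scalar recursion of the form $\delta_{k+1}\le(\rho+c\rho^{k+1})\delta_k+c\rho^{k+1}$, and invoke Lemma~\ref{lem:realSequence2}. The only cosmetic difference is ordering: the paper first changes basis and then subtracts the fixed point (packaging the latter as the auxiliary Lemma~\ref{lem:perturbedConvergenceBisZero}), whereas you subtract $\bar D$ first and then conjugate, which is equivalent.
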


\textbf{Convergence of $D_k$.} Let us explicit how Lemma \ref{lem:perturbedConvergenceBis} allows to prove convergence of $(D_k(\theta))_{k\in \NN}$. Start with a fixed $\theta \in \Omega$, we first drop the dependency in $\theta$ for clarity.
We have from Remark \ref{rem:jacobianF}
        \begin{align*}
            A =
            \diag\left(\frac{1}{a}\right) \hat{P}\diag\left(\frac{1}{b}\right) \hat{P}^T.
        \end{align*}
Setting $S =  \diag\left(\frac{1}{\sqrt{a}}\right)$, we have that 
\begin{align*}
    S^{-1} A S = \diag\left(\frac{1}{\sqrt{a}}\right) \hat{P}\diag\left(\frac{1}{b}\right) \hat{P}^T \diag\left(\frac{1}{\sqrt{a}}\right) ,
\end{align*}
which is symmetric. Therefore, there is an orthogonal matrix $U($ and diagonal matrix $E$ such that
\begin{align*}
    S^{-1} A S= U E U^T,
\end{align*}
and
\begin{align*}
    A = SU E U^T S^{-1} = SUE (SU)^{-1}.
\end{align*}
Set $Q = SU$, we have by submultiplicativity of $\|\cdot\|_{\mathrm{op}}$
\[
    \|Q\|_\mathrm{op} \leq \|U\|_\mathrm{op} \|S\|_\mathrm{op} = \|S\|_\mathrm{op} = \left\| \frac{1}{\sqrt{a}}\right\|_\infty .
\]
Similarly $\|Q^{-1}\|_\mathrm{op} = \left\| \sqrt{a}\right\|_\infty$. From Lemma \ref{lem:diagonalisableRankOnePerturbation}, $Q$ diagonalizes both $A$ and $G$.

Getting back the dependency in $\theta$, we fix $\theta_0 \in \Omega$, and set for all $\theta \in \Omega$
\begin{align*}
\bar{D} &\colon \theta \mapsto (I - G(\theta))^{-1} B(\theta) ,  \\ 
\bar{\rho} &\colon \theta \mapsto \max\{\rho(\theta),
\|Q(\theta)^{-1}G(\theta)Q(\theta)\|_\mathrm{op}\}<1 ,   
\end{align*}
where $\rho(\theta)< 1$ is given in Lemma \ref{lem:linearConvergenceX} and $\|Q(\theta)^{-1}G(\theta)Q(\theta)\|_\mathrm{op}$ is the largest eigenvalue, in absolute value, of $G(\theta)$, which is smaller than $1$ and continuous with respect to $\theta$. In particular, $\bar{\rho}$ is continuous. 

Fix a compact set $V\subset \Omega$ which contains $\theta_0$ in its interior and a compact set $W \subset \RR^n$ which contains $\cent(x_k(\theta))$ for all $k \in \NN$ and $\theta \in V$ (this exists thanks to Lemma \ref{lem:linearConvergenceX}).
We set $c_1 \colon \Omega \to \Rzpos$ such that $c_1 = Lc / \rho$ where $c \colon \Omega\to \Rzpos$ is the constant in Lemma \ref{lem:linearConvergenceX} and $L$ is a Lipschitz constant of $J_F$ and $G$ on $W \times V$ (recall that they are continuously differentiable).
Using Lemma~\ref{lem:jacobianInvariance}, we have for all $\theta \in V$ and $k \in \NN$, 
\begin{align*}
    \|J_F(x_k(\theta),\theta) - J_F(\bar{x}(\theta),\theta)\| 
    &= \|J_F(\cent(x_k(\theta)),\theta) - J_F(\cent(\bar{x}(\theta)),\theta)\| \\
    &\leq c_1(\theta) \bar{\rho}(\theta)^{k+1},
\end{align*}
and
\begin{align*}
    \|G(x_k(\theta),\theta) - G(\bar{x}(\theta),\theta)\| 
    &= \|G(\cent(x_k(\theta)),\theta) - G(\cent(\bar{x}(\theta)),\theta)\|\| \\
    &\leq c_1(\theta) \bar{\rho}(\theta)^{k+1} .
\end{align*}
The largest eigenvalue of $G(\theta)$ is at most $\bar{\rho}(\theta)$ so that Lemma \ref{lem:perturbedConvergenceBis} applies, and we have for all $k \in \NN$ and all $\theta \in V$,
\begin{align*}
    &\|D_k(\theta) - \bar{D}(\theta)\|   \\
    \leq\ &\bar{\rho}(\theta)^{\frac{k}{2}} \const\left( \left\| \frac{1}{\sqrt{a(\theta)}}\right\|_\infty,  \left\| \sqrt{a(\theta)}\right\|_\infty, c_1(\theta), \left\|\frac{d x_0(\theta)}{d\theta}\right\|, \|B(\theta)\|, \bar{\rho}(\theta)\right),
\end{align*}
where $\const \colon \Rzpos^5 \times (0,1)$ is continuous.
All terms in the right hand side are continuous functions of $\theta$ and can be uniformly bounded on $V$, so that $D_k(\theta) \to \bar D(\theta) = (I - G(\theta))^{-1} B(\theta)$ at a locally uniform linear convergence rate.

\textbf{Convergence of the derivatives of Sinkhorn--Knopp towards the derivatives of entropic regularization.}
From Lemma \ref{lem:perturbedConvergenceBis} the limit of $(D_k(\theta_0))_{k\in\NN}$ is of the form
\begin{align*}
    \bar{D}(\theta_0) &= (I - G(\theta_0))^{-1} B(\theta_0) \\
    [A(\theta_0)\; B(\theta_0)] &= J_F(\bar{x}(\theta_0), \theta_0).
\end{align*}
Recall that for any $\lambda \in \RR$, and any $x,\theta$, $P(x + \lambda 1_n, \theta) = P(x + \lambda 1_n, \theta)$ so that $J_P(x + \lambda 1_n, \theta) = J_P(x + \lambda 1_n, \theta)$. Therefore expression \eqref{eq:autodiffModified} is equivalently rewritten as
\begin{align}
    \frac{d P_k(\theta)}{d \theta} &= \frac{\partial P(\cent(x_{k}),\theta)}{\partial x}D_k(\theta) + \frac{\partial P(\cent(x_{k}),\theta)}{\partial \theta},\nonumber\\
    \label{eq:autodiffModifiedCentered}
\end{align}
We have shown locally uniform linear convergence of both $D_k(\theta)$ and $\cent(x_k(\theta))$, by Assumption \ref{ass:mainAssumption},  equation \eqref{eq:autodiffModifiedCentered} is continuously differentiable, hence it has a locally Lipschitz dependency in $\cent(x_k),D_k$ and $\theta$, so that as $k\to \infty$ uniformly linearly in a neighborhood of $\theta_0$ 
\begin{align}
    \label{eq:limitMkDerivative}
    \lim_{k \to \infty} \frac{d}{d\theta}P_{k}(\theta) = \frac{\partial P(\bar{x}(\theta),\theta)}{\partial x} \bar{D}(\theta) + \frac{\partial P(\bar{x}(\theta),\theta)}{\partial \theta} .
\end{align}
Note that $P_k(\theta)$ converges pointwise towards $\soll = P(\bar{x}(\theta), \theta)$ which is a solution to problem \eqref{eq:reg-ot}.
By local uniform convergence of derivatives and the fact that $P_k$ are continuously differentiable, thanks to Lemma~\ref{lem:cont-diff}, we have that $\sol$ is continuously differentiable and 
\begin{align*}
    \lim_{k \to \infty} \frac{dP_k(\theta)}{d\theta} = \frac{d \soll}{d\theta}.
\end{align*}

\textbf{Expression of the derivative.}
Finally, by construction of $G$ in  \eqref{eq:reducedJacobian} and thanks to Lemma \ref{lem:diagonalisableRankOnePerturbation}, we have for all $x,\theta$, that $I - A(x,\theta)$ and $I - G(x,\theta)$ have the same eigenspaces all eigenvalues being nonzero except the one generated by $1_n$ for which corresponds to eigenvalue $0$ for $I-A(x,\theta)$ and $1$ for $I - G(x,\theta)$. Therefore, we have $(I-G(x,\theta)))^{-1} = (I-A(x,\theta))^\sinverse + 1_n v(x,\theta)^T$, where $v(x,\theta)$ is the normalized eigenvector of $A(\theta)^T$ associated to eigenvalue $1$ (see \eqref{eq:eigenvectorsJacobian}). Recall that $\sinverse$ denotes the spectral pseudo-inverse for diagonalisable matrices (Definition \ref{def:spectralInverse}). From Lemma \ref{lem:computeJacobian}, we have $\frac{dP(x,\theta)}{dx} 1_n = 0$ for all $(x,\theta)$, therefore for all $\theta \in \Omega$,
\begin{align*}
     \frac{\partial P(\bar{x}(\theta),\theta)}{\partial x} \bar{D}(\theta) &=\frac{\partial P(\bar{x}(\theta),\theta)}{\partial x} (I - G(\bar{x}(\theta),\theta))^{-1} B(\theta) , \\
     &= \frac{\partial P(\bar{x}(\theta),\theta)}{\partial x} ( (I-A(\theta))^\sinverse + 1_n v(x,\theta)^T) B(\theta) \\
     & = \frac{\partial P(\bar{x}(\theta),\theta)}{\partial x} (I - A(\theta))^{\sinverse} B(\theta).
\end{align*}
We have therefore that
\begin{align*}
    \frac{d \soll}{d\theta} &=   \frac{\partial P(\bar{x}(\theta),\theta)}{\partial x} (I - A(\theta))^{\sinverse} B(\theta)  + \frac{\partial P(\bar{x}(\theta),\theta)}{\partial \theta} , \\
    [A(\theta)\; B(\theta)] &= J_F(\bar{x}(\theta), \theta) ,
\end{align*}
which concludes the proof.
\end{proof}

\section{Proof of Lemma \ref{lem:perturbedConvergenceBis}}\label{sec:proofPerturb}

We start with two lemmas on real sequences.
The first one is a quantitative version of  \cite[Lemma 9, Chapter 2]{polyak1987introduction}.

\begin{lemma}[Quantitative Gladyshev convergence]
    Let $(\alpha_k)_{k\in \NN}$ and $(\beta_k)_{k\in \NN}$ be positive summable sequences and $(z_k)_{k \in \NN}$ be a positive squence such that for all $k \in \NN$
    \begin{align*}
        z_{k+1} \leq (1 + \alpha_k)z_k + \beta_k.
    \end{align*}
    Then for all $k \in \NN$, 
    \begin{align*}
        z_k \leq \exp\left( \sum_{i=0}^{+\infty} \alpha_i\right)\left(z_0 +  \sum_{j=0}^{+\infty} \beta_j\right)
    \end{align*}
    \label{lem:realSequence1}
\end{lemma}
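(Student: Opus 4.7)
The plan is to unroll the recursion explicitly and convert the resulting products of $(1+\alpha_i)$ factors into exponentials of sums via the elementary inequality $1+x \leq e^x$, valid for $x \geq 0$.

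First I would establish by a straightforward induction on $k$ that
\[
z_k \leq \left(\prod_{i=0}^{k-1}(1+\alpha_i)\right) z_0 + \sum_{j=0}^{k-1}\left(\prod_{i=j+1}^{k-1}(1+\alpha_i)\right)\beta_j,
\]
with the convention that empty products equal $1$ and empty sums equal $0$. The base case $k=0$ degenerates to $z_0 \leq z_0$, and the inductive step is obtained by multiplying the inductive hypothesis through by the nonnegative factor $(1+\alpha_k)$ and then adding $\beta_k$, which exactly matches the given recursion.

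Next, since each $\alpha_i$ is nonnegative, I would apply $1+\alpha_i \leq e^{\alpha_i}$ term by term to bound every product appearing above by a single $k$-independent quantity,
\[
\prod_{i=j+1}^{k-1}(1+\alpha_i) \leq \exp\left(\sum_{i=0}^{k-1}\alpha_i\right) \leq \exp\left(\sum_{i=0}^{\infty}\alpha_i\right),
\]
which is finite by summability of $(\alpha_i)$. Using this uniform bound, together with $\sum_{j=0}^{k-1}\beta_j \leq \sum_{j=0}^\infty \beta_j < \infty$ from summability of $(\beta_j)$, yields immediately
\[
z_k \leq \exp\left(\sum_{i=0}^\infty \alpha_i\right)\left(z_0 + \sum_{j=0}^\infty \beta_j\right),
\]
as claimed. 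There is no substantial obstacle here: the two summability hypotheses are used precisely to guarantee that both the exponential prefactor and the tail sum of the $\beta_j$ are finite, making the resulting bound uniform in $k$.
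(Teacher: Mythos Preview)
Your proof is correct. It differs from the paper's approach: rather than unrolling the recursion, the paper introduces the auxiliary sequence
\[
w_k = z_k \prod_{i=k}^{+\infty}(1+\alpha_i) + \sum_{i=k}^{+\infty}\beta_i \prod_{j=i+1}^{+\infty}(1+\alpha_j),
\]
shows that $(w_k)$ is nonincreasing, and then uses $z_k \leq w_k \leq w_0$ together with the same bound $\prod(1+\alpha_i)\leq \exp(\sum\alpha_i)$. Your direct unrolling is more elementary and makes the dependence on the finite partial products and sums explicit before passing to the infinite bounds; the paper's Lyapunov-style argument is slightly slicker in that it avoids the explicit induction and handles the tails in one stroke. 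Both routes rely on the same key inequality $1+\alpha_i\leq e^{\alpha_i}$ and the summability hypotheses in the same way, and both yield the identical final bound.
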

\begin{proof}
    For all $k \in \NN$, set
    \begin{align*}
        w_k = z_k \prod_{i=k}^{+ \infty} (1 + \alpha_i) + \sum_{i=k}^{+\infty} \beta_i \prod_{j=i+1}^{+ \infty} (1 + \alpha_j).
    \end{align*}
    Remark that using concavity of logarithm $\prod_{i=0}^{+ \infty} (1 + \alpha_i) \leq \exp\left( \sum_{i=0}^{+\infty} \alpha_i\right)$, so that $w_k$ is well defined. Remark also that $w_k \geq z_k$ for all $k$.
    
    The sequence $(w_k)_{k \in \NN}$ is decreasing, indeed, we have for all $k \in \NN$,
    \begin{align*}
        w_{k+1} &= z_{k+1} \prod_{i=k+1}^{+ \infty} (1 + \alpha_i) + \sum_{i=k+1}^{+\infty} \beta_i\prod_{j=i+1}^{+ \infty} (1 + \alpha_j)\\
        &\leq \left( (1 + \alpha_k)z_k + \beta_k\right) \prod_{i=k+1}^{+ \infty} (1 + \alpha_i) + \sum_{i=k+1}^{+\infty} \beta_i\prod_{j=i+1}^{+ \infty} (1 + \alpha_j)\\
        &= z_k \prod_{i=k}^{+ \infty} (1 + \alpha_i) + \sum_{i=k}^{+\infty} \beta_i \prod_{j=i+1}^{+ \infty} (1 + \alpha_j) \\
        &= w_k.
    \end{align*}
    Therefore, for all $k \in \NN$,
    \begin{align*}
        z_k &\leq w_k \\
        &\leq w_0  \\
        & =v_0\prod_{i=0}^{+ \infty} (1 + \alpha_i) + \sum_{i=0}^{+\infty} \beta_i \prod_{j=i+1}^{+ \infty} (1 + \alpha_j) \\
        & \leq \prod_{i=0}^{+ \infty} (1 + \alpha_i) \left(v_0 + \sum_{i=0}^{+\infty} \beta_i\right) \leq  \exp\left( \sum_{i=0}^{+\infty} \alpha_i\right) \left(v_0 + \sum_{i=0}^{+\infty} \beta_i\right), \\
    \end{align*}
    and the result follows.
\end{proof}
The following lemma specify Lemma~\ref{lem:realSequence1} when $\alpha_k$ and $\beta_k$ are geometric sequences.
\begin{lemma}[Application of Gladyshev's convergence to geometric sequences]
    Let $\rho \in (0,1)$, $c > 0$, and $(\delta_k)_{k \in \NN}$ be a positive sequence such that for all $k \in \NN$,
    \begin{align}\label{eq:halfRateAssum}
        \delta_{k+1} \leq (\rho + c \rho^{k+1}) \delta_k + c \rho^{k+1}.
    \end{align}
    Then, $(\delta_k)_{k \in \NN}$ is a geometric sequence: for all $k \in \NN$,
    \begin{align*}
        \delta_k  &\leq   \rho^{\frac{k}{2}} \exp\left( \frac{c\sqrt{\rho}}{1-\rho} \right)\left(\delta_0 +  \frac{c\sqrt{\rho}}{1-\sqrt{\rho}}\right),
    \end{align*}
    \label{lem:realSequence2}
\end{lemma}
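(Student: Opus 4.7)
The plan is to reduce Lemma \ref{lem:realSequence2} to Lemma \ref{lem:realSequence1} by normalizing away the geometric decay that we expect in the conclusion. Concretely, I would introduce the rescaled sequence $z_k \eqdef \delta_k / \rho^{k/2}$, so that any linear bound of the form $z_k \leq C$ will translate into the desired geometric bound $\delta_k \leq C \rho^{k/2}$.

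The first step is to plug $\delta_k = \rho^{k/2} z_k$ into the hypothesis \eqref{eq:halfRateAssum} and divide both sides by $\rho^{(k+1)/2}$. This yields a recursion of the form
\begin{equation*}
    z_{k+1} \leq \sqrt{\rho}\,(1 + c\rho^k)\, z_k + c\, \rho^{(k+1)/2}.
\end{equation*}
The factor $\sqrt{\rho}$ on the leading term is the crucial gain coming from the rescaling: it allows me to absorb the multiplicative perturbation. Using $\sqrt{\rho} \leq 1$ and the identity $\sqrt{\rho}\cdot\rho^k = \rho^{k+1/2}$, I get
\begin{equation*}
    z_{k+1} \leq (1 + c\sqrt{\rho}\,\rho^k)\, z_k + c\sqrt{\rho}\,\rho^{k/2}.
\end{equation*}
This is exactly the template of Lemma \ref{lem:realSequence1} with the choices $\alpha_k = c\sqrt{\rho}\,\rho^k$ and $\beta_k = c\sqrt{\rho}\,\rho^{k/2}$, both of which are nonnegative and summable with $\sum_{i\geq 0}\alpha_i = c\sqrt{\rho}/(1-\rho)$ and $\sum_{i\geq 0}\beta_i = c\sqrt{\rho}/(1-\sqrt{\rho})$.

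Applying Lemma \ref{lem:realSequence1} to $(z_k)_{k\in\NN}$ with these summable sequences and using $z_0 = \delta_0$ gives
\begin{equation*}
    z_k \leq \exp\!\left(\frac{c\sqrt{\rho}}{1-\rho}\right)\left(\delta_0 + \frac{c\sqrt{\rho}}{1-\sqrt{\rho}}\right),
\end{equation*}
and multiplying back by $\rho^{k/2}$ yields the claimed bound on $\delta_k$. No step is particularly hard: the only subtlety is the choice of exponent $k/2$ (rather than $k$) in the normalization, which is dictated by the fact that the perturbation coefficient $c\rho^{k+1}$ in the leading term decays at the same rate $\rho$ as the nominal contraction, so one must split that rate between the new contraction factor $\sqrt{\rho}$ and the new perturbation coefficient of size $\sqrt{\rho}\,\rho^k$ to make the Gladyshev-type argument go through.
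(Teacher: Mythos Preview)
Your proposal is correct and follows essentially the same approach as the paper: rescale by $\rho^{k/2}$, use $\sqrt{\rho}\leq 1$ to absorb the contraction factor into a multiplicative perturbation of the form $1+\alpha_k$, and apply Lemma~\ref{lem:realSequence1} with the same geometric sums. The only cosmetic difference is that the paper normalizes by $c\rho^{k/2}$ rather than $\rho^{k/2}$, which shifts the factor $c$ between $z_0$ and $\beta_k$ but yields the identical final bound.
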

\begin{proof}
    Dividing~\eqref{eq:halfRateAssum} on both sides by $c\rho^{(k+1)/2}$, we have for all $k \in \NN$,
    \begin{align*}
        \frac{\delta_{k+1}}{c\rho^{\frac{k+1}{2}}} &\leq \frac{\delta_k}{c\rho^{\frac{k-1}{2} }} + \frac{\delta_k}{c\rho^{\frac{k}{2} }}  \frac{c \rho^{k+1} c\rho^{\frac{k}{2}} }{c \rho^\frac{k+1}{2}} + \rho^{\frac{k+1}{2}}  \\
        &=\frac{\sqrt{\rho}\delta_k}{c\rho^{\frac{k}{2} }} + \frac{\delta_k}{c\rho^{\frac{k}{2} }}  c \rho^{k + \frac{1}{2}} + \rho^{\frac{k+1}{2}} \\
        &\leq  \frac{\delta_k}{c\rho^{\frac{k}{2}}} (1 + c\rho^{k+ \frac{1}{2}})+ \rho^{\frac{k+1}{2}} .
    \end{align*}
    Setting for all $k \in \NN$,
    \begin{align*}
        z_k = \frac{\delta_k}{c\rho^{\frac{k}{2}}} , \quad
        \alpha_k = c \rho^{k+ \frac{1}{2}} , \quad \text{and} \quad
        \beta_k = \rho^{\frac{k+1}{2}},
    \end{align*}
    we may apply Lemma \ref{lem:realSequence1} to obtain the result. Note that $\sum_{i=0}^{+\infty} \alpha_i = \frac{c\sqrt{\rho}}{1-\rho}$ and $\sum_{i=0}^{+\infty} \beta_i = \frac{\sqrt{\rho}}{1-\sqrt{\rho}}$ , so that for all $k \in \NN$
    \begin{align*}
        \frac{\delta_k}{c\rho^{\frac{k}{2}}}  = z_k
        & \leq \exp\left( \sum_{i=0}^{+\infty} \alpha_i\right)\left(z_0 +  \sum_{j=0}^{+\infty} \beta_j\right) \\
        &= \exp\left( \frac{c\sqrt{\rho}}{1-\rho} \right)\left(\frac{\delta_0}{c}  +  \frac{\sqrt{\rho}}{1-\sqrt{\rho}}\right),
    \end{align*}
    which is the desired result.
\end{proof}

\begin{lemma}[Reduced perturbated convergence]\label{lem:perturbedConvergenceBisZero}
  Let $\rho < 1$ and $\bar G \in \RR^{n \times n}$ have operator norm smaller than $\rho$ and $\bar B \in \RR^{n \times m}$.
  Let $(G_k)_{k \in \NN}$ and $(B_k)_{k \in \NN}$ be a sequence of matrices such that there exists a constant $c_0>0$ such that for all $k \in \NN$,
  \begin{align*}
    \| G_k - \bar G \|_{\mathrm{op}}  &\leq c_0 \rho^{k+1}, \\
    \| B_k - \bar B \|  &\leq c_0 \rho^{k+1}.
  \end{align*}
  Then for the recursion
  \begin{align*}
    D_{k+1} = G_k D_k + B_k ,
  \end{align*}
  setting $\bar{D} = (I - \bar{G})^{-1} \bar B$, we have
  \begin{align*}
    &\| D_k - \bar{D} \| \\
    \leq \quad & \rho^{\frac{k}{2}} \exp\left( c_0 \sqrt{\rho}\frac{1+ \|\bar B\|}{(1 - \rho)^2} \right)\left(\|D_0\| +\frac{\|\bar B\|}{1 - \rho}  +  \frac{c_0 \sqrt{\rho} (1 + \|B\|)}{(1-\sqrt{\rho})^2}\right).
  \end{align*}
\end{lemma}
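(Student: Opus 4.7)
The statement is a perturbation result about a linear recursion with a contractive limit operator, so the plan is to reduce it to the real scalar recursion already handled by Lemma~\ref{lem:realSequence2}. First, since $\|\bar G\|_\mathrm{op} \leq \rho < 1$, the matrix $I - \bar G$ is invertible via the Neumann series and $\bar D = (I - \bar G)^{-1}\bar B$ is well-defined with the bound $\|\bar D\| \leq \|\bar B\|/(1-\rho)$. In particular, $\bar D$ satisfies the fixed-point identity $\bar D = \bar G \bar D + \bar B$, which is the starting point for linearization.

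The key algebraic step is to subtract this identity from the recursion defining $D_{k+1}$, giving
\begin{equation*}
D_{k+1} - \bar D = G_k(D_k - \bar D) + (G_k - \bar G)\bar D + (B_k - \bar B).
\end{equation*}
Setting $\delta_k = \|D_k - \bar D\|$, using the triangle inequality together with $\|G_k\|_\mathrm{op} \leq \|\bar G\|_\mathrm{op} + \|G_k - \bar G\|_\mathrm{op} \leq \rho + c_0 \rho^{k+1}$ and the hypotheses on $G_k - \bar G$ and $B_k - \bar B$, I obtain
\begin{equation*}
\delta_{k+1} \leq (\rho + c_0 \rho^{k+1}) \delta_k + c_0 \rho^{k+1}\left(1 + \frac{\|\bar B\|}{1-\rho}\right).
\end{equation*}

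To fit the template of Lemma~\ref{lem:realSequence2}, which requires the same constant in front of $\delta_k$ and as the additive term, I enlarge the constant by setting $c = c_0 (1 + \|\bar B\|)/(1 - \rho)$; since $\|\bar B\| \geq 0$, one checks $c \geq c_0$ and $c \geq c_0(1 + \|\bar B\|/(1-\rho))$, so both coefficients are bounded by $c\rho^{k+1}$. Lemma~\ref{lem:realSequence2} then gives
\begin{equation*}
\delta_k \leq \rho^{k/2} \exp\!\left(\frac{c\sqrt{\rho}}{1-\rho}\right)\left(\delta_0 + \frac{c\sqrt{\rho}}{1-\sqrt{\rho}}\right).
\end{equation*}
Substituting the value of $c$, using $\delta_0 \leq \|D_0\| + \|\bar D\| \leq \|D_0\| + \|\bar B\|/(1-\rho)$, and bounding $1-\rho = (1-\sqrt{\rho})(1+\sqrt{\rho}) \geq (1-\sqrt{\rho})$ in the denominator of the second term inside the parenthesis, yields exactly the stated bound.

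There is no real obstacle here; the step that requires a little care is repackaging the two (slightly) different constants into a single $c$ compatible with Lemma~\ref{lem:realSequence2}, and tracking how the factors $(1-\rho)^{-1}$ and $(1-\sqrt{\rho})^{-1}$ propagate so that the final constants match the form in the statement. Everything else is bookkeeping with the triangle inequality and sub-multiplicativity of the operator norm.
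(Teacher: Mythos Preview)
Your proposal is correct and follows essentially the same approach as the paper: the same fixed-point subtraction to get the error recursion, the same scalar bound $\delta_{k+1}\leq(\rho+c_0\rho^{k+1})\delta_k+c_0\rho^{k+1}(1+\|\bar D\|)$, the same enlargement to a single constant $c=c_0(1+\|\bar B\|)/(1-\rho)$ before invoking Lemma~\ref{lem:realSequence2}, and the same clean-up via $\delta_0\leq\|D_0\|+\|\bar B\|/(1-\rho)$ and $1/(1-\rho)\leq 1/(1-\sqrt{\rho})$.
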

\begin{proof}
    Note that $\bar{G}$ is invertible and it follows that the potential limit is $\bar{D} = (I - \bar{G})^{-1} \bar B $, as it is a fixed point of the limiting recursion, $\bar D = \bar G \bar D + \bar B$.
    We rewrite the recursion as follows
    \begin{align*}
         D_{k+1} - \bar{D} &= G_k D_k+ B_k - \bar G \bar D - \bar{B} \\
         &= G_k (D_k - \bar D) +  (G_k - \bar G)\bar D +B_k  - \bar{B}.
    \end{align*}
    Setting for all $k \in \NN$, $\delta_k = \|D_k - \bar{D}\|$, using the fact that $\|\cdot\|_\mathrm{op}$ is subordinate to $\|\cdot\|$, we have the recursion,
    \begin{align*}
        \delta_{k+1} &\leq \|G_k (D_k - \bar D)\| +  \|(G_k - \bar G)\bar D\| + \|B_k  - \bar{B}\|\\
        &\leq \|G_k\|_\mathrm{op}\| (D_k - \bar D)\| +  \|(G_k - \bar G)\|_\mathrm{op}\|\bar D\| + \|B_k  - \bar{B}\|\\
        &\leq (\rho + c_0 \rho^{k+1}) \delta_k + c_0 \rho^{k+1} (\|\bar{D}\| + 1). 
    \end{align*}
    Note that $ \|\bar{D}\| = \|(I - \bar{G})^{-1} \bar B \| \leq \|(I - \bar{G})^{-1}\|_{\mathrm{op}} \| \bar B \| \leq \frac{\|\bar B\|}{1 - \rho}$.
    Since
    \begin{align*}
        c_0 \leq c_0  \frac{1 + \|\bar B\|}{1 - \rho}
        \quad \text{and} \quad
        c_0 \left( 1+  \frac{\|\bar B\|}{1-\rho}\right) \leq c_0 \frac{1 + \|\bar B\|}{1 - \rho} ,
    \end{align*}
    we apply Lemma \ref{lem:realSequence2} with $c =  c_0  \frac{1 + \|\bar B\|}{1 - \rho}$ and use the fact that $\frac{1}{1-\rho} \leq \frac{1}{1-\sqrt{\rho}}$ and $\|D_0 - \bar D\| \leq \|D_0\| +\frac{\|\bar B\|}{1 - \rho}$.
\end{proof}

\begin{proof}[Proof of Lemma \ref{lem:perturbedConvergenceBis}]
    Note that $\bar{G}$ is invertible and it follows that the potential limit is $\bar{D} = (I - \bar{G})^{-1} \bar B $, which satisfy $\bar D = \bar A \bar D + \bar B$.
    Since $\bar G$ is diagonalisable in the basis given by $Q$, there is a diagonal matrix $E$ such that $\bar G = Q E Q^{-1}$. We rewrite equivalently the recursion as follows
    \begin{align*}
        Q^{-1} D_{k+1} = Q^{-1} G_k Q Q^{-1}D_k + Q^{-1} B_k ,
    \end{align*}
    and setting $\tilde{D}_k = Q^{-1} D_k$, $\tilde{G}_k = Q^{-1} G_k Q$ and $\tilde{B}_k = Q^{-1} B_k $ for all $k \in \NN$, this reduces to
    \begin{align*}
        \tilde{D}_{k+1} = \tilde{G}_k  \tilde{D}_k + \tilde{B}_k.
    \end{align*}
    When $k \to \infty$, we have $\tilde{G}_k \to E$, which has operator norm at most $\rho$ and $\tilde{B}_k \to Q^{-1} \bar B$. Set $\tilde{D}$ the fixed point of the limiting recursion for $\tilde{D}_k$,
    \begin{align*}
        \tilde{D} = (I- E)^{-1} Q^{-1} \bar B = Q^{-1} Q (I- E)^{-1} Q^{-1} \bar B =Q^{-1} (I- QEQ^{-1})^{-1} \bar B    = Q^{-1} \bar D.
    \end{align*}
    Furthermore for all $k \in \NN$, we have the following bounds
    \begin{align*}
        \| \tilde{G}_k - E \|_{\mathrm{op}} 
        &= \| Q^{-1} (G_k - \bar{G}) Q \|_{\mathrm{op}} & \\
        & \leq \| Q^{-1}\|_\mathrm{op}\| (G_k - \bar{G}) \|_\mathrm{op}\|Q \|_{\mathrm{op}} & {\footnotesize\text{($\|\cdot\|_{\mathrm{op}}$ is submultiplicative)}} \\
        & \leq  \left(c_1 \| Q^{-1}\|_\mathrm{op} \| Q\|_\mathrm{op}\right) \rho^{k+1} & 
        {\footnotesize\text{(by hypothesis \eqref{eq:lemmaErrorHypG})}} \\
        \| \tilde{B}_k -  Q^{-1} \bar B\| &= \| Q^{-1} (B_k - \bar{B})  \|\\
        &\leq \| Q^{-1}\|_\mathrm{op} \|B_k - \bar{B}  \| & {\footnotesize\text{($\|\cdot\|_{\mathrm{op}}$ is subordinate to $\|\cdot\|$)}}  & 
        \\
        &\leq \left( c_1 \| Q^{-1}\|_\mathrm{op}\right) \rho ^{k+1}, & 
        {\footnotesize\text{(by hypothesis~\eqref{eq:lemmaErrorHypB})}} \\
        \| Q^{-1} \bar B\| &= \| Q^{-1}\|_\mathrm{op} \|\bar{B}\|\\
        \| \tilde{D}_0 \| &= \| Q^{-1}\|_\mathrm{op} \|D_0\|.
    \end{align*}
    We apply Lemma \ref{lem:perturbedConvergenceBisZero} with 
    \begin{align*}
        c_0 = c_1 \| Q^{-1}\|_\mathrm{op}  (1 + \|Q\|_\mathrm{op}) ,
    \end{align*}
    which gives for all $k \in \NN$,
    \begin{align*}
        &\|D_k - \bar D\| \\
        =\ & \|Q (\tilde{D}_k - \tilde{D})\| \\
        \leq\ &\|Q \|_\mathrm{op} \|\tilde{D}_k - \tilde{D}\| \\
        \leq\ & \rho^{\frac{k}{2}} \|Q \|_\mathrm{op} \exp\left(  c_1 \| Q^{-1}\|_\mathrm{op}  (1 + \|Q\|_\mathrm{op}) \sqrt{\rho}\frac{1+ \| Q^{-1}\|_\mathrm{op}\|\bar B\|}{(1 - \rho)^2} \right) \\
        &\quad\times \| Q^{-1}\|_\mathrm{op}\left(\|D_0\| +\frac{\|\bar B\|}{1 - \rho}  +  \frac{c_1  (1 + \|Q\|_\mathrm{op}) \sqrt{\rho} (1 + \|\bar B\|)}{(1-\sqrt{\rho})^2}\right) ,
    \end{align*}
    which is the desired result
\end{proof}

\section{Additional lemmas}\label{sec:lemmas}
In the following, we prove some technical, but important, lemmas used in the main proof.
\begin{lemma}[Reduced eigenspace]
    Let $A \in \RR^{n \times n}$ be diagonalisable. Let $u$ be such that $Au = u$ and $v$ such that $A^Tv = v$, and assume that eigenvalue $1$ is simple, and that $u v^T = 1$. Then $\tilde{A} := A - uv^T$ and $A$ have the same eigenspaces with the same eigenvalues, except eigenvalue $1$ for $A$ which is set to $0$ for $\tilde{A}$.
    \label{lem:diagonalisableRankOnePerturbation}
\end{lemma}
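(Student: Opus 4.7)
The plan is to use the biorthogonality of right and left eigenvectors of a diagonalisable matrix together with the fact that eigenvalue $1$ is simple, so that the rank-one perturbation $uv^T$ affects only the eigenspace for eigenvalue $1$ and leaves all others unchanged. I read the normalization condition ``$uv^T = 1$'' in the statement as $v^T u = 1$, since $uv^T$ is a rank-one matrix while the proof requires the scalar pairing to be $1$.

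First I would fix an eigenbasis $u_1 = u, u_2, \ldots, u_n$ of $A$ with eigenvalues $1 = \lambda_1, \lambda_2, \ldots, \lambda_n$; simplicity of the eigenvalue $1$ together with diagonalisability guarantees that $\lambda_i \neq 1$ for $i \geq 2$. Next, I would establish the standard biorthogonality relation $v^T u_i = 0$ for $i \geq 2$: since $A u_i = \lambda_i u_i$, one has $v^T A u_i = \lambda_i v^T u_i$ and also $v^T A u_i = (A^T v)^T u_i = v^T u_i$, so $(1 - \lambda_i) v^T u_i = 0$ and therefore $v^T u_i = 0$.

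With these identities in hand, a direct computation finishes the argument: for $i \geq 2$, $\tilde A u_i = A u_i - u(v^T u_i) = \lambda_i u_i$, while $\tilde A u = A u - u(v^T u) = u - u = 0$. Hence the basis $\{u_1, \ldots, u_n\}$ diagonalises $\tilde A$ with eigenvalues $0, \lambda_2, \ldots, \lambda_n$, which is exactly the conclusion: $A$ and $\tilde A$ share all eigenspaces, with matching eigenvalues except that the one-dimensional eigenspace for $A$ associated to $1$ becomes the kernel of $\tilde A$. There is no real obstacle here beyond correctly interpreting the normalization condition and invoking the biorthogonality of left and right eigenvectors at distinct eigenvalues, which is immediate for diagonalisable matrices.
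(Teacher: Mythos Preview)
Your proof is correct and follows essentially the same approach as the paper: both show that $v$ is orthogonal to every eigenvector $u_i$ of $A$ with $i\geq 2$, then compute $\tilde A u_i$ directly. The only cosmetic difference is that the paper obtains $v^T u_i = 0$ by identifying $v$ with the first row of $Q^{-1}$ in a diagonalisation $A = QDQ^{-1}$ (so that $Q^{-1}Q = I$ gives the orthogonality), whereas you derive it straight from the eigenvalue equations via $(1-\lambda_i)v^T u_i = 0$; your route is slightly more direct but the substance is the same.
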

\begin{proof}
$A$ of the form $Q D Q^{-1}$ for an invertible $Q$ and a diagonal matrix $D$. Assume that the first diagonal entry of $D$ is $1$. Columns of $Q$ form an eigenbasis and we may impose that the first column is $u$. Rows of $Q^{-1}$ form an eigenbasis of $A^T$, set $v_0$ the vector corresponding to the first row. Since $1$ is a simple eigenvalue, the corresponding eigenspace has dimension $1$ and there exists $\alpha \neq 0$ such that $v = \alpha v_0$. We have $u^Tv = 1$ by assumption and $u^T v_0 = 1$ because $Q^{-1}Q = I$, this shows that $\alpha = 1$ and therefore $v$ is the first row of $Q^{-1}$.

    We have $v^Tu = 1$ and therefore $\tilde{A} u = A u - u = 0$. Let $\tilde{u}$ be a different column of $Q$ corresponding to an eigenvector of $A$ associated to eigenvalue $d$, we have $v^T \tilde{u} = 0$ so that $\tilde{A} \tilde{u} = A \tilde{u} = d\tilde{u}$. This concludes the proof.
\end{proof}

\begin{lemma}[Uniform convergence leads to continuous differentiable limit]\label{lem:cont-diff}
    Let $U \subset \RR^p$ be open and $(f_k)_{k \in \NN}$ be a sequence of continuously differentiable functions from $U$ to $\RR$ converging pointwise to $\bar{f} \colon U \to \RR$, such that $\nabla f_k$ converges pointwise, locally uniformly on $U$. Then $\bar{f}$ is continuously differentiable on $U$ and $\nabla \bar{f} = \lim_{k \to \infty} \nabla f_k$.
    \label{lem:gradientUniformConvergence}
\end{lemma}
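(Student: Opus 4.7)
The plan is to reduce the statement to the classical one dimensional result, namely that the pointwise limit of continuously differentiable functions whose derivatives converge uniformly is itself continuously differentiable with the expected derivative. The key is that local uniform convergence of the gradients lets us pass to the limit in an integral representation along line segments.

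First, I would fix an arbitrary $x_0 \in U$ and choose an open ball $B = B(x_0, r) \subset U$ on which $\nabla f_k$ converges uniformly to some limit $g \colon B \to \RR^p$. Since each $\nabla f_k$ is continuous on $B$ and uniform limits of continuous functions are continuous, $g$ is continuous on $B$. Next, for any $x, y \in B$ and any $k$, the fundamental theorem of calculus applied to the $C^1$ function $t \mapsto f_k(x + t(y-x))$ yields
\begin{equation*}
f_k(y) - f_k(x) = \int_0^1 \nabla f_k(x + t(y-x))^T (y-x)\, dt.
\end{equation*}
Uniform convergence of $\nabla f_k \to g$ on the compact segment $[x,y] \subset B$ allows me to interchange limit and integral, and combined with pointwise convergence $f_k \to \bar f$ I obtain
\begin{equation*}
\bar f(y) - \bar f(x) = \int_0^1 g(x + t(y-x))^T (y-x)\, dt.
\end{equation*}

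From this identity and continuity of $g$, I would verify that $\bar f$ is differentiable at every $x \in B$ with gradient $g(x)$. Indeed, writing
\begin{equation*}
\bar f(y) - \bar f(x) - g(x)^T(y-x) = \int_0^1 \bigl(g(x + t(y-x)) - g(x)\bigr)^T (y-x)\, dt,
\end{equation*}
the integrand is bounded in norm by $\|y-x\| \sup_{t \in [0,1]} \|g(x + t(y-x)) - g(x)\|$, which is $o(\|y-x\|)$ as $y \to x$ by continuity of $g$ at $x$. Hence $\bar f$ is differentiable on $B$ with $\nabla \bar f = g$, and since $g$ is continuous, $\bar f$ is continuously differentiable on $B$. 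As $x_0 \in U$ was arbitrary and differentiability is a local property, $\bar f$ is continuously differentiable on all of $U$ with $\nabla \bar f = \lim_{k\to\infty} \nabla f_k$.

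There is no real obstacle here; the only delicate point is to make sure local uniform convergence suffices (it does, because differentiability is checked locally and the integral representation lives on a single segment contained in a ball of uniform convergence). No assumption of uniform convergence of $f_k$ itself is needed beyond pointwise convergence, which is why the integral representation based on derivatives is the natural route rather than, say, a direct $\varepsilon/3$ argument on difference quotients.
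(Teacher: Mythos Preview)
Your proof is correct and follows essentially the same route as the paper: both reduce to a one-dimensional argument along line segments contained in a neighborhood of uniform convergence, use that the limit $g$ of $\nabla f_k$ is continuous, and conclude that $\bar f$ has continuous partial (or directional) derivatives given by $g$. The only cosmetic difference is that the paper invokes the classical one-variable theorem (uniform convergence of $h_k'$ together with pointwise convergence of $h_k$ implies $\bar h' = \lim h_k'$) as a black box, whereas you unpack that step via the integral representation $f_k(y)-f_k(x)=\int_0^1 \nabla f_k(x+t(y-x))^T(y-x)\,dt$ and verify Fr\'echet differentiability directly; this makes your argument slightly more self-contained but otherwise equivalent.
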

\begin{proof}
    Let $g = \lim_{k \to \infty} \nabla f_k$ the pointwise limit. By local uniform convergence, $g$ is continuous on $U$. Fix any $x \in U$ and any $v \in \RR^n$ and set $I$ a closed interval such that $x + tv \in U$ for all $t \in I$ and $0$ is in the interior of $I$ (such interval exists because $U$ is open). The sequence of univariate functions $h_k \colon t \mapsto f_k(x + t v)$ is continuously differentiable and satisfy for all $k$ and all $t \in I$
    \begin{align*}
        h_k'(t) = \left\langle \nabla f_k(x + tv), v\right\rangle .
    \end{align*}
    The derivatives $h_k'$ converge uniformly on $I$ to $\left\langle g(x + tv), v\right\rangle$ which is continuous in $t$. Therefore the function $\bar{h} \colon t \mapsto \bar{f}(x + tv)$ is continuously differentiable with derivative given by $\left\langle g(x + tv), v\right\rangle$, by uniform convergence of derivatives. Since $x\in U$ and $v \in \RR^n$ were arbitrary, this implies that $\bar{f}$ admits continuous partial derivatives and it is therefore continuously differentiable with gradient $g$.
\end{proof}

\begin{lemma}[Centering]\label{lem:variationPseudonorm}
    For $x,x' \in \RR^n$, 
    \begin{align*}
        \left\|\cent(x) - \cent(x')\right\|_\infty \leq \|x - x'\|_\var,
    \end{align*}
    where $\cent$ is defined in~\eqref{eq:lcenter} and $\|\cdot\|_\var$ is defined in~\eqref{eq:fvar}.
\end{lemma}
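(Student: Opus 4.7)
The plan is to reduce the inequality to a statement about a single vector by exploiting linearity, then argue via simple arithmetic mean bounds.

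First I would set $y = x - x'$ and use the fact that the centering operator $\cent$ is linear (it is an affine projection onto the hyperplane $\{z : \sum_i z_i = 0\}$, but crucially it is $\RR$-linear since it is of the form $z \mapsto (I - \frac{1}{n}1_n 1_n^T)z$). Therefore $\cent(x) - \cent(x') = \cent(y)$, reducing the inequality to $\|\cent(y)\|_\infty \leq \|y\|_\var$. Moreover $\|\cdot\|_\var$ is translation-invariant: $\|y\|_\var = \|\cent(y)\|_\var$.

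Next, write $\bar{y} = \frac{1}{n}\sum_{i=1}^n y_i$, so that $\cent(y)_i = y_i - \bar{y}$. Since the arithmetic mean lies between the extremes, $\min_j y_j \leq \bar{y} \leq \max_j y_j$. For any index $i$, this immediately gives the two-sided bound
\[
    \min_j y_j - \max_j y_j \;\leq\; y_i - \bar{y} \;\leq\; \max_j y_j - \min_j y_j,
\]
so $|y_i - \bar{y}| \leq \max_j y_j - \min_j y_j = \|y\|_\var$. Taking the maximum over $i$ yields $\|\cent(y)\|_\infty \leq \|y\|_\var$, which is exactly the desired inequality.

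There is no real obstacle here; the lemma is essentially a one-line consequence of the fact that a mean lies in the convex hull of its arguments, combined with the linearity of $\cent$. The only mild care is to note the two uses of $\bar{y}$ being sandwiched between $\min$ and $\max$, which handle the upper and lower bounds on $y_i - \bar{y}$ respectively.
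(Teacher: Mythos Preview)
Your proof is correct and follows essentially the same approach as the paper: both arguments reduce to the observation that a vector's mean lies between its minimum and maximum (equivalently, that a zero-mean vector has $\min \leq 0 \leq \max$), combined with the translation invariance of $\|\cdot\|_\var$. The only cosmetic difference is that the paper works with $f = \cent(x) - \cent(x')$ and uses $1_n^T f = 0$ to bound $\|f\|_\infty \leq \|f\|_\var$, whereas you work with $y = x - x'$ and bound $|y_i - \bar y|$ directly.
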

\begin{proof}
    Note that for $f \in \RR^n$ and $a \in \RR$, $\|f+ a 1_n\|_\var = \|f\|_\var$.
    Set $f = \cent(x) - \cent(x')$, we have $1_n^Tf = \sum_{i=1}^n f_i = 0$ so that 
    \begin{align*}
        \min_i f_i \leq \sum_{i=1}^n f_i = 0 \leq \max_i f_i.
    \end{align*}
    This implies the following
    \begin{align*}
        &\quad\|f\|_\infty = \max_{i} |f_i| \\
        &= \max_{i} \max\{f_i, -f_i\} \\
        &= \max\{ \max_i f_i, \max_i -f_i\} \\
        &= \max\{ \max_i f_i, -\min_i f_i\}\\
        &\leq \max\{ \max_i f_i - \min_if_i, \max_i f_i -\min_i f_i\} \\
        &=\|f\|_\var.
    \end{align*}
    Now $f = \cent(x) - \cent(x') = x - x' +1_n \left(\frac{1}{n} \sum_{i=1}^n x'_i - \frac{1}{n} \sum_{i=1}^n x_i\right)$, so that $\|f\|_\var = \|x - x'\|_\var$ which concludes the proof.
\end{proof}

\begin{lemma}
    Let $\rho \in (0,1)$, $c > 0$ and $(\delta_k)_{k \in \NN}$ be a positive sequence such that for all $k \in \NN$,
    \begin{align}\label{eq:tightestLemmaHyp}
        \delta_{k+1} \leq (\rho + c \rho^{k+1}) \delta_k + c \rho^{k+1}.
    \end{align}
    Then, for all $k \in \NN$, such that $k \geq \frac{\rho}{1-\rho}$, we have
    \begin{align*}
        \delta_k \leq \rho^k \exp\left(1 + \frac{c}{1-\rho} \right) \left(\delta_0  +  c(k+1)\right) .
    \end{align*}
    \label{lem:realSequence3}
\end{lemma}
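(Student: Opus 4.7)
The plan is to refine the normalization used in Lemma~\ref{lem:realSequence2}. That lemma divides by $\rho^{k/2}$ to handle the additive term at the price of halving the rate; here we divide by the full $\rho^k$ to preserve the sharp rate, at the price of a non-summable additive residue which then has to be absorbed by a second normalization. Concretely, set $z_k = \delta_k/\rho^k$; dividing the hypothesis by $\rho^{k+1}$ gives the cleaner recursion
\[
z_{k+1} \leq (1+c\rho^k)\, z_k + c .
\]
The multiplicative perturbation is now summable, but the additive term $c$ no longer decays, so Lemma~\ref{lem:realSequence1} cannot be invoked directly.

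To absorb the multiplicative perturbation, introduce $P_k = \prod_{i=0}^{k-1}(1+c\rho^i)$ with $P_0 = 1$, and set $y_k = z_k/P_k$. Using $P_{k+1} = (1+c\rho^k) P_k$, a direct substitution yields $y_{k+1} \leq y_k + c/P_{k+1}$. Since $c \geq 0$ and $\rho \in (0,1)$ we have $P_j \geq 1$ for every $j$, so the inequality telescopes to $y_k \leq \delta_0 + c k$. The product is uniformly controlled by $1+x \leq e^x$ and the geometric series, giving $P_k \leq \exp(c/(1-\rho))$.

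Unwinding the two changes of variables yields
\[
\delta_k = \rho^k P_k y_k \leq \rho^k \exp\!\left(\frac{c}{1-\rho}\right)\bigl(\delta_0 + ck\bigr),
\]
which is slightly sharper than the claimed bound: bounding $\exp(c/(1-\rho)) \leq \exp(1 + c/(1-\rho))$ and $ck \leq c(k+1)$ produces exactly the stated form. The hypothesis $k \geq \rho/(1-\rho)$ is not actually required by this chain of estimates, so the conclusion holds for every $k \in \NN$; the threshold is presumably chosen to mark where this $\rho^k(k+1)$ rate overtakes the $\rho^{k/2}$ rate of Lemma~\ref{lem:realSequence2}, in line with the second limitation remark after Theorem~\ref{thm:cvgt}.

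The only conceptual hurdle is recognizing that the first normalization leaves a non-summable additive term, so that a Gladyshev-style argument on $z_k$ alone would blow up as $\exp(ck)$; dividing further by $P_k$ is what turns the residue $c$ into a telescoping summand $c/P_{k+1} \leq c$ and replaces exponential growth in $k$ by merely linear growth. Once this double normalization is in place, the remainder of the argument is bookkeeping.
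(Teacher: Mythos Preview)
Your proof is correct and in fact cleaner than the paper's. The paper normalizes by $\rho^{\alpha k}$ for a free parameter $\alpha\in(0,1)$, applies Lemma~\ref{lem:realSequence1} to the resulting recursion (both the multiplicative perturbation $c\rho^{k+1-\alpha}$ and the additive term $\rho^{(k+1)(1-\alpha)}$ are summable for $\alpha<1$), and only afterwards chooses $\alpha = 1 + \log(1+1/k)/\log\rho$ for each fixed $k$. The constraint $\alpha>0$ is exactly what forces $k\geq \rho/(1-\rho)$, and the estimate $(1+1/k)^k\leq e$ is what produces the extra factor $e$ in the exponential. Your double normalization, first by $\rho^k$ and then by the running product $P_k$, sidesteps the parameter optimization entirely: it yields the bound $\rho^k\exp(c/(1-\rho))(\delta_0+ck)$ for every $k\in\NN$, so both the threshold and the extra $e$ are artifacts of the paper's proof technique rather than features of the lemma. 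What the paper's route buys is that it stays within the Gladyshev framework of Lemma~\ref{lem:realSequence1} throughout; your route trades that uniformity for an elementary telescoping argument that gives a strictly sharper conclusion.
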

\begin{proof}
    Fix $\alpha \in (0,1)$ to be chosen latter.
    Dividing~\eqref{eq:tightestLemmaHyp} on both sides by $c\rho^{(k+1)/\alpha}$, we have for all $k \in \NN$,
    \begin{align*}
        \frac{\delta_{k+1}}{c\rho^{\alpha(k+1)}} &\leq \frac{\delta_{k}}{c\rho^{\alpha k}} \left(\frac{\rho c\rho^{ \alpha k}}{c\rho^{\alpha (k+1)}} + \frac{c \rho^{k+1}c\rho^{\alpha k}}{c\rho^{\alpha (k+1)}} \right) + \frac{ c \rho^{k+1}}{c\rho^{\alpha(k+1)}}\\
        &=\frac{\delta_{k}}{c\rho^{\alpha k}} \left(\rho^{1 - \alpha} +c\rho^{k+1 - \alpha} \right) + \rho^{(k+1)(1-\alpha)}\\
        &\leq  \frac{\delta_k}{c\rho^{\alpha k}} \left(1 + c\rho^{k+1 - \alpha} \right) + \rho^{(k+1)(1-\alpha)} .
    \end{align*}
    Setting for all $k \in \NN$,
    \begin{align*}
        z_k = \frac{\delta_k}{c\rho^{\alpha k}}, \quad
        \alpha_k = c\rho^{k+1 - \alpha}, \quad \text{and} \quad
        \beta_k = \rho^{(k+1)(1-\alpha)},
    \end{align*}
    we apply Lemma \ref{lem:realSequence1} to obtain the result.
    As $(\alpha_{k})_{k \in \NN}$ and $(\beta_{k})_{k \in \NN}$ are geometric sequences, we have $\sum_{i=0}^{+\infty} \alpha_i = \frac{c\rho^{1-\alpha}}{1-\rho}\leq \frac{c}{1-\rho}$ and $\sum_{i=0}^{+\infty} \beta_i = \frac{\rho^{1-\alpha}}{1-\rho^{1-\alpha}} \leq \frac{1}{1 - \rho^{1-\alpha}}$ , so that for all $k \in \NN$,
    \begin{align*}
        \frac{\delta_k}{c\rho^{\alpha k}}  &=z_k \\
        & \leq \exp\left( \sum_{i=0}^{+\infty} \alpha_i\right)\left(z_0 +  \sum_{j=0}^{+\infty} \beta_j\right) \\
        &= \exp\left( \frac{c}{1-\rho} \right)\left(\frac{\delta_0}{c}  +  \frac{1}{1-\rho^{1-\alpha}}\right) .
    \end{align*}
    Since $\alpha$ was arbitrary, the preceding holds for all $k \in \NN$ and $\alpha \in (0,1)$. Fix $k \in \NN$ such that $k > \frac{\rho}{1-\rho}$. Setting $\alpha = 1 + \log\left(1 + \frac{1}{k}\right) / \log(\rho)$, since $\rho \in (0,1)$, we have
    \begin{align*}
       0 = 1 + \log\left(1 + \frac{1 - \rho}{\rho}\right) / \log(\rho) < \alpha < 1.
    \end{align*}
    We have 
    \begin{align*}
        \rho^{\alpha k} &= \rho^k \rho^{k\log((k+1)/k) / \log(\rho)} =\rho^k\left(1 + \frac{1}{k}\right)^k \leq e \rho^k ,
    \end{align*}
    and
    \begin{align*}
        \frac{1}{1-\rho^{1-\alpha}} & = \frac{1}{1-\rho^{-\log(1 + 1/k) / \log(\rho)}} = \frac{1}{1-\rho^{\log(k / (k+1)) / \log(\rho)}} \\
        &=\frac{1}{1-\frac{k}{k+1}} =  k+1.
    \end{align*}
    Therefore, for all $k \geq \frac{\rho}{1-\rho}$,
    \begin{align*}
        \delta_k \leq \rho^k \exp\left(1 + \frac{c}{1-\rho} \right) \left(\delta_0  +  c(k+1)\right) ,
    \end{align*}
    proving our claim.
\end{proof}

\section*{Acknowledgments}
E.P. and S.V. would like to thank Jérôme Bolte for fruitful and inspiring discussions.
S.V. thanks Jeremy Cohen and Titouan Vayer for raising the issue of the convergence of the derivatives of the Sinkhorn--Knopp iterates during a seminar in Lyon, France.

\bibliographystyle{siamplain}
\bibliography{references}

\end{document}